\newcommand{\N}{{\mathbb N}}
\newcommand{\R}{{\mathbb R}}
\renewcommand{\a }{\alpha}
\renewcommand{\d }{\delta }
\renewcommand{\O}{\Omega}
\numberwithin{equation}{section}
\newtheorem{theorem}{Theorem}[section]
\newtheorem{proposition}[theorem]{Proposition}
\newtheorem{lemma}[theorem]{Lemma}
\newtheorem{definition}[theorem]{Definition}
\theoremstyle{definition}
\newcommand{\brm}{\begin{remark}\rm}
\newcommand{\erm}{\end{remark}}
\newcommand{\brms}{\begin{remark}\rm}
\newcommand{\erms}{\end{remark}}
\newcommand{\bte}{\begin{theorem}}
\newcommand{\ete}{\end{theorem}}
\newcommand{\bpr}{\begin{proposition}}
\newcommand{\epr}{\end{proposition}}
\newcommand{\ble}{\begin{lemma}}
\newcommand{\ele}{\end{lemma}}
\newcommand{\beq}{\begin{equation}}
\newcommand{\eeq}{\end{equation}}
\newcommand{\bdm}{\begin{displaymath}}
\newcommand{\edm}{\end{displaymath}}
\numberwithin{equation}{section}
\newcommand{\bos}{\begin{remark}\rm}
\newcommand{\eos}{\end{remark}}
\newcommand{\ben}{\begin{enumerate}}
\newcommand{\een}{\end{enumerate}}
\renewcommand{\a }{\alpha }
\renewcommand{\d}{\delta }
\newcommand{\D }{\Delta }
\newcommand{\n }{\nabla }
\renewcommand{\O }{\Omega }
\newcommand{\vp }{\varphi }
\newcommand{\ov}{\overline}
\newcommand{\pa}{\partial}
\newcommand{\wt }{\tilde}
\newcommand{\be}{\begin{equation}}
\newcommand{\ee}{\end{equation}}
\newcommand{\DD}{{\mathbb H}}
\title[Symmetry for nonvariational quasi-linear elliptic systems]{Symmetry
results for nonvariational \\ quasi-linear elliptic systems}
\author[L.\ Montoro]{Luigi Montoro$^*$}
\address{Dipartimento di Matematica
\newline\indent
Universit\`a della Calabria
\newline\indent
Ponte Pietro Bucci 31B, I-87036 Arcavacata di Rende, Cosenza, Italy}
\email{montoro@mat.unical.it}
\author[B.\ Sciunzi]{Berardino Sciunzi$^*$}
\address{Dipartimento di Matematica
\newline\indent
Universit\`a della Calabria
\newline\indent
Ponte Pietro Bucci 31B, I-87036 Arcavacata di Rende, Cosenza, Italy}
\email{sciunzi@mat.unical.it}
\thanks{$^*$Dipartimento di Matematica,
Universit\`a della Calabria,
Ponte Pietro Bucci 31B, I-87036 Arcavacata di Rende, Cosenza, Italy,
E-mail: {\em montoro@mat.unical.it}, {\em sciunzi@mat.unical.it}}
\author[M.\ Squassina]{Marco Squassina$^\dagger$}
\address{Dipartimento di Informatica
\newline\indent
Universit\`a degli Studi di Verona
\newline\indent
C\'a Vignal 2, Strada Le Grazie 15, I-37134 Verona, Italy}
\email{marco.squassina@univr.it}
\thanks{$^\dagger$Departimento di Informatica,
Universit\`a di Verona,
C\`a Vignal 2, Strada Le Grazie 15, I-37134 Verona, Italy.
E-mail: {\em marco.squassina@univr.it}}
\thanks{The authors were partially supported by the
Italian PRIN Research Project 2007: {\em Metodi Variazionali e Topologici
nello Studio di Fenomeni non Lineari}}
\begin{document}

\subjclass[2000]{35J40; 58E05}

\keywords{Quasi-linear elliptic systems, non-variational systems, axial symmetry, radial symmetry.}

\begin{abstract}
By virtue of a weak comparison principle in small domains we prove axial symmetry in convex
and symmetric smooth bounded domains as well as radial symmetry in balls for regular solutions
of a class of quasi-linear elliptic systems in non-variational form. Moreover, in the two
dimensional case, we study the system when set in a half-space.
\end{abstract}
\maketitle

\section{Introduction and main results}
The aim of this paper is to get some symmetry and monotonicity results for the solutions  $(u,v)\in C^{1,\a}(\ov{\O})\times C^{1,\a}(\ov{\O})$
to the following quasi-linear elliptic system
\begin{equation}
	\label{NVsystem}
\begin{cases}
-\D_p u= f(u,v) & \text{in $\O$},  \\
-\D_m v= g(u,v) & \text{in $\O$},  \\
u>0,\,\, v>0  & \text{in $\O$},    \\
u=0,\,\, v=0 & \text{in $\pa\O$},
\end{cases}
\end{equation}
where $\Omega$ is a smooth bounded domain of $\R^N$, $N\geq 2$ and
$\D_p={\rm div}(|Du|^{p-2}Du)$ is the $p$-Laplacian operator, $|\cdot|$ denoting
the standard Euclidean norm in $\R^N$. Furthermore, in the
two-dimensional case, we shall also consider the system defined in the
half-space.
Problem \ref{NVsystem} is the stationary system corresponding to the parabolic system
\begin{equation*}
\begin{cases}
u_t-\D_p u= f(u,v) & \text{in $\O\times (0,\infty)$},  \\
v_t-\D_m v= g(u,v) & \text{in $\O\times (0,\infty)$},  \\
\end{cases}
\end{equation*}
where  the  adoption of the $p$-Laplacian operator inside the diffusion term arises
in various applications where the standard linear heat operator $u_t-\Delta$
is replaced by a nonlinear diffusion with gradient dependent diffusivity. The equations in the above system usually arise
in the theory of non-Newtonian filtration fluids, in turbulent flows in porous media
and in glaciology (cf.~\cite{AE}).

System~\eqref{NVsystem}
does not necessarily admits a variational structure and it has been previously studied
in the literature both from the point of view of existence and symmetry of smooth solutions.
For the existence of a positive radially symmetric $C^2$ solution in the particular case
where $f(u,v)=u^\alpha v^\beta$ and $g(u,v)=u^\gamma v^\delta$ for suitable values
of $\alpha,\beta,\gamma,\delta\geq 0$, we refer the reader to~\cite{CFMT} and to the reference therein. Concerning
the symmetry properties (and a priori estimates) of any smooth solution of~\eqref{NVsystem} in
the special case $f(u,v)=f(v)$ and $g(u,v)=g(u)$ are positive and nondecreasing functions,
we refer to~\cite{DS3} (see also~\cite{ACM}).

In our main results we shall always assume on $f,g$ that
\begin{equation}
	\label{libandpos}
	f,g\in {\rm Lip}_{\rm loc}(\R^2_+)
	\quad\text{and}\quad
	f(s,t)>0,\quad g(s,t)>0,\quad\text{for all $s,t>0$},
\end{equation}
and that they satisfy the monotonicity (also known as {\em cooperativity}) conditions
\begin{equation}
	\label{coop}
\frac{\partial f}{\partial t}(s,t)\geq 0
\quad\text{and}\quad
\frac{\partial g}{\partial s}(s,t)\geq 0,\quad\text{for all $s,t>0$}.
\end{equation}
The sign assumptions~\eqref{libandpos} and~\eqref{coop} are natural in the study of this class of problems. Furthermore,
it is shown in~\cite{Tr} that conditions~\eqref{coop} are, actually, necessary in order to obtain
symmetry results for the solutions to~\eqref{NVsystem}. For useful regularity features of
the solutions to~\eqref{NVsystem}, we refer the reader to~\cite[Section 2]{DS3} where the regularity
of the quasi-linear equation $-\D_p u=h(x)$ is investigated under the assumption that $h\in C^{0,\alpha}
\cap W^{1,\sigma}_{{\rm loc}}(\Omega)$, where $\sigma\geq\max\{N/2,2\}$. In turn, the regularity properties of~\eqref{NVsystem}
can be obtained by applying the results of~\cite{DS3} to the choices $h(x)=f(u(x),v(x))$ and $h(x)=g(u(x),v(x))$
where $f,g$ are locally Lipschitz.
\vskip2pt
\noindent
Under the same cooperativity condition \eqref{coop}, for the non-degenerate case $p=2=m$, we refer e.g. to \cite{BS,De,Tr} and references included.
\vskip2pt
\noindent
In the following we present our symmetry results, which complete those of~\cite{DS3},
first in the case where system~\eqref{NVsystem} is set is a
smooth bounded symmetric domain and, then, when it is set in a half-space of $\R^2$.
\vskip8pt
\noindent Our results are based on the use of a refined version of the Moving plane technique \cite{S} (see also \cite{GNN}). We will in particular use
the moving plane procedure as improved in \cite{BN}.  In the case of the half-space of $\R^2$, we exploit a geometric idea as in \cite{DS4}, which
is more related to the techniques developed in \cite{BCN}.

\subsection{System in a smooth bounded domain}
In a bounded domain $\Omega$,
we consider solutions $u,v \in C^1(\ov{\O})\times C^1(\ov{\O})$
to the non-variational quasi-linear system
\begin{equation}\label{eq:sist}
\begin{cases}
-\D_p u= f(u,v) & \text{in $\O$},  \\
-\D_m v= g(u,v) & \text{in $\O$},  \\
u>0,\,\, v>0  & \text{in $\O$},    \\
u=0,\,\, v=0 & \text{in $\pa\O$},
\end{cases}
\end{equation}
Furthermore, we assume that~\eqref{libandpos} and that the cooperativity
condition~\eqref{coop} is satisfied.
Let us set
\begin{equation*}
Z_u\equiv \{x\in \Omega: \n u(x)=0\},\qquad Z_v\equiv \{x\in \Omega: \n v(x)=0\}.
\end{equation*}

The first main result of the paper is the following

\begin{theorem}
	\label{main1}
Assume that~\eqref{libandpos} and~\eqref{coop} hold.
If $\Omega$ is convex with respect to the  $x_1$-direction, and  symmetric with
respect to the hyperplane $T_{0}=\{x_1  =0\}$,
then $u$ and $v$ are symmetric and nondecreasing in
the $x_1$-direction in  $\Omega _0=\{x_1  <0\}$, with
$$
\frac {\partial  u}{\partial x_1}(x) >0\quad\text{in $ \Omega  _0  \setminus Z_u $},
\qquad
\frac {\partial  v}{\partial x_1}(x) >0\quad\text{in $ \Omega  _0  \setminus Z_v $}.
$$
In particular, if $\Omega  $ is a ball, then $u$ and $v$ are radially symmetric with
$\frac {\partial  u}{\partial  r}(r)<0$ and $\frac {\partial  v}{\partial  r}(r)<0$.
\end{theorem}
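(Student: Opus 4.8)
The plan is to run the moving-plane method in the $x_1$-direction, using the cooperativity \eqref{coop} to handle the two equations of \eqref{eq:sist} simultaneously, together with a weak comparison principle valid on caps of small Lebesgue measure, both to start the procedure and to continue it in the style of \cite{BN}. Set $a_1=\inf_{x\in\O}x_1$, and for $\lambda\in(a_1,0]$ introduce the cap $\O_\lambda=\{x\in\O:\ x_1<\lambda\}$, the reflection $x_\lambda=(2\lambda-x_1,x_2,\dots,x_N)$ across $T_\lambda=\{x_1=\lambda\}$, and the reflected functions $u_\lambda(x)=u(x_\lambda)$, $v_\lambda(x)=v(x_\lambda)$. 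Since $\O$ is convex in the $x_1$-direction and symmetric about $T_0$, the reflection maps $\O_\lambda$ into $\O$, so $u_\lambda,v_\lambda>0$ solve the same system in $\O_\lambda$; moreover on $\partial\O_\lambda$ one has $u\le u_\lambda$ and $v\le v_\lambda$ (equality on $T_\lambda\cap\ov{\O}$, while $u=v=0$ on the remaining part of $\partial\O_\lambda$, which lies on $\partial\O$).

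The key lemma is: \emph{if $|\O_\lambda|$ is small enough, then $u\le u_\lambda$ and $v\le v_\lambda$ in $\O_\lambda$.} I would prove it by testing the weak formulations of the equations for $u-u_\lambda$ and $v-v_\lambda$ against $(u-u_\lambda)^+$ and $(v-v_\lambda)^+$ respectively (both extended by $0$ outside $\O_\lambda$) and adding. On the left-hand side, the strict monotonicity of $\xi\mapsto|\xi|^{p-2}\xi$ yields a nonnegative quantity which, after inserting the weights $\rho_u=(|\n u|+|\n u_\lambda|)^{p-2}$ and $\rho_v=(|\n v|+|\n v_\lambda|)^{m-2}$, bounds from below a weighted Dirichlet energy of $(u-u_\lambda)^+$ and $(v-v_\lambda)^+$. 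On the right-hand side, writing
$$
f(u,v)-f(u_\lambda,v_\lambda)=\bigl(f(u,v)-f(u_\lambda,v)\bigr)+\bigl(f(u_\lambda,v)-f(u_\lambda,v_\lambda)\bigr),
$$
the first summand is $\le L\,(u-u_\lambda)^+$ by \eqref{libandpos}, while on $\{u>u_\lambda\}$ the cooperativity \eqref{coop} forces the second to be $\le L\,(v-v_\lambda)^+$; symmetrically for $g$. A weighted Sobolev–Poincar\'e inequality on $\O_\lambda$ whose constant vanishes as $|\O_\lambda|\to0$ (the delicate point, treated as in \cite{DS3}) then absorbs the right-hand side into the left and gives $(u-u_\lambda)^+=(v-v_\lambda)^+\equiv 0$. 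This is precisely the weak comparison principle in small domains of the abstract, read off for the coupled pair.

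With the lemma in hand, set $\bar\lambda=\sup\{\lambda\in(a_1,0]:\ u\le u_\mu,\ v\le v_\mu\ \text{in }\O_\mu\ \text{for all }\mu\in(a_1,\lambda]\}$; the lemma makes this set nonempty, and continuity gives $u\le u_{\bar\lambda}$, $v\le v_{\bar\lambda}$ in $\O_{\bar\lambda}$. Suppose $\bar\lambda<0$. By \eqref{coop} and \eqref{libandpos}, $u_{\bar\lambda}-u\ge 0$ and $v_{\bar\lambda}-v\ge 0$ satisfy differential inequalities to which the strong comparison principle for the $p$-Laplacian applies (cooperativity provides the favorable sign of the coupling, the Lipschitz bound controls the rest); the alternative $u\equiv u_{\bar\lambda}$ is impossible, since on the part of $\partial\O_{\bar\lambda}$ lying on $\partial\O$ one has $u=0$ while $u_{\bar\lambda}>0$ there (reflected inside $\O$ by convexity and $\bar\lambda<0$). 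Hence $u<u_{\bar\lambda}$ in $\O_{\bar\lambda}\setminus Z_u$ and $v<v_{\bar\lambda}$ in $\O_{\bar\lambda}\setminus Z_v$. Using that $Z_u,Z_v$ are negligible (cf. \cite{DS3}), choose a compact $K\subset\O_{\bar\lambda}$ with $|\O_{\bar\lambda}\setminus K|$ as small as needed and with $u<u_{\bar\lambda}$, $v<v_{\bar\lambda}$ on $K$ with a uniform gap; for $\e>0$ small, $K\subset\O_{\bar\lambda+\e}$, the gap survives by continuity, and $\O_{\bar\lambda+\e}\setminus K$ has small measure, so the lemma applies there with the appropriate boundary data. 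Combining, $u\le u_{\bar\lambda+\e}$ and $v\le v_{\bar\lambda+\e}$ in $\O_{\bar\lambda+\e}$, contradicting the maximality of $\bar\lambda$. Therefore $\bar\lambda=0$, i.e. $u\le u_0$, $v\le v_0$ in $\O_0$; running the same argument in the $(-x_1)$-direction yields the reverse inequalities, so $u,v$ are symmetric about $T_0$. Since $u\le u_\lambda$, $v\le v_\lambda$ in $\O_\lambda$ for every $\lambda\le 0$, $u$ and $v$ are nondecreasing in $x_1$ on $\O_0$, and a final application of the strong comparison principle to $u_\lambda-u$ and $v_\lambda-v$ with $\lambda<0$ upgrades this to $\partial u/\partial x_1>0$ on $\O_0\setminus Z_u$ and $\partial v/\partial x_1>0$ on $\O_0\setminus Z_v$. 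When $\O$ is a ball, performing the above along every direction forces radial symmetry, and the strict monotonicity along each radius gives $\partial u/\partial r<0$ and $\partial v/\partial r<0$.

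The main obstacle I expect is the degeneracy of the $p$-Laplacian: because $\n u$ (resp. $\n v$) may vanish on the nontrivial set $Z_u$ (resp. $Z_v$), neither the comparison in small domains nor the step $\bar\lambda=0$ can rely on the classical linear theory, and one must work throughout with the weights $\rho_u,\rho_v$ and the associated weighted Sobolev inequalities of \cite{DS3}. The extra difficulty with respect to a single equation is that these estimates must be closed simultaneously for the pair $(u,v)$ with two possibly different exponents $p$ and $m$, the coupling being kept under control exactly by the cooperativity assumption \eqref{coop}.
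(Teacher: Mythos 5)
Your strategy is the same as the paper's: a weak comparison principle on caps of small measure (with the cooperativity exactly where you place it, to replace $|v-v_\lambda|$ by $(v-v_\lambda)^+$ on $\{u>u_\lambda\}$, and the weighted Poincar\'e inequality of \cite{DS1} to absorb the right-hand side), then a Berestycki--Nirenberg continuation past $\bar\lambda$ using a compact set $K$ avoiding the critical sets. The skeleton is correct and matches the paper's proof, modulo one point.

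The gap is in how you rule out the alternative $u\equiv u_{\bar\lambda}$. Because $p$ and $m$ can be degenerate, the strong comparison principle of \cite{Da2} is only a per-component statement: for each connected component $\mathcal{C}$ of $\Omega_{\bar\lambda}\setminus Z_{u,\bar\lambda}$ one has either $u<u_{\bar\lambda}$ in $\mathcal{C}$ or $u\equiv u_{\bar\lambda}$ in $\mathcal{C}$. Your argument ($u=0<u_{\bar\lambda}$ on $\partial\O\cap\partial\O_{\bar\lambda}$) only kills the equality case for the component whose boundary reaches $\partial\O$. It says nothing about a component $\mathcal{C}$ completely encircled by the critical set, i.e. with $\partial\mathcal{C}\setminus T_{\bar\lambda}\subset Z_u$. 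That case is genuinely possible a priori when $p\neq 2$, and it is exactly where the paper does additional work: if $u\equiv u_{\bar\lambda}$ on such a $\mathcal{C}$, then the reflection of $\partial\mathcal{C}\setminus T_{\bar\lambda}$ also lies in $Z_u$, which would disconnect $\Omega\setminus Z_u$; this contradicts the connectedness of $\Omega\setminus Z_u$ established in \cite{DS1,DS2}. You need to invoke that connectedness result (and the distinction between $Z_u$ and the smaller set $Z_{u,\lambda}=\{\nabla u=\nabla u_\lambda=0\}$ where the comparison principle actually degenerates) to close this step; the boundary datum alone does not.

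Two smaller remarks. First, to get the strict inequality $u<u_{\bar\lambda}$ one also needs to first linearize by adding $\Lambda u$ on both sides (using that $f,g$ are locally Lipschitz and that \eqref{coop} fixes the sign of the cross terms), as in \eqref{eq:confr}--\eqref{eq:sist1}; you state this only in words. Second, the final claim $\partial u/\partial r<0$ for all $r\in(0,R)$ in the ball case is not an immediate corollary of $\partial u/\partial x_1>0$ on $\Omega_0\setminus Z_u$ as you write it: one must still argue, using radial symmetry and again the structure of $Z_u$, that $Z_u$ cannot contain a sphere of positive radius. The paper deals with this only in Theorem~\ref{main2} under the extra restriction on $p,m$, so it is worth flagging that your sentence is not a complete justification.
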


Notice that this result holds true under the same assumptions that were considered
in~\cite{DS3} where the particular case $f(u,v)=f(v)$ and $g(u,v)=g(u)$ is considered. More precisely,
no monotonicity is requested on the function $f$ (resp.\ $g$)
with respect to $u$ (resp.\ $v$).
\vskip3pt

The second result is an improvement under some restrictions on the values of $p,m$, of the previous Theorem~\ref{main1}.

\begin{theorem}
		\label{main2}
	Assume that~\eqref{libandpos} and~\eqref{coop} hold and  $\frac{2N+2}{N+2}<p ,m<\infty$.
		  If $\Omega$ is convex with respect to the $x_1$-direction and  symmetric with
  respect to the hyperplane $T_{0}=\{x_1=0\}$,   then $u$ and $v$ are
  symmetric and nondecreasing in the $x_1$-direction in  $\Omega_0=\{x_1<0\}$ with
    $$
\frac {\partial  u}{\partial  x_1}(x) >0\quad\text{in $\Omega_0$},\qquad
\frac {\partial  v}{\partial  x_1}(x) >0\quad\text{in $\Omega_0$}.
$$
In particular $Z_u\subset T_0$ and $Z_v\subset T_0$ . Therefore if for $N$ orthogonal directions
$e_i $ the domain $\Omega$ is symmetric with respect to any hyperplane ${\displaystyle T_{0}^{e_i }
=\{x \cdot e_i=0\}}$, then
\begin{equation}
	\label{zxcPRIbis}
Z_u=Z_v=\{0\},
\end{equation}
assuming that $0$ is the center of symmetry.
\end{theorem}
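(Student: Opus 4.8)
The plan is to build on Theorem~\ref{main1}, which (under the very same hypotheses, with no restriction on $p,m$) already gives that $u$ and $v$ are symmetric with respect to $T_0$ and nondecreasing in the $x_1$-direction in $\Omega_0$, so that $\partial_{x_1}u\ge 0$ and $\partial_{x_1}v\ge 0$ there, together with the strict sign outside $Z_u$ and $Z_v$. What is left is to remove the exceptional sets from the strict monotonicity, and the assumption $\frac{2N+2}{N+2}<p,m<\infty$ is exactly what makes this possible: in this range the regularity and summability properties recalled from~\cite{DS3} (in particular $|\nabla u|^{-1},|\nabla v|^{-1}\in L^{q}_{\mathrm{loc}}$ for a suitable $q$, together with the associated weighted functional inequalities) force the linearized operators to obey a strong maximum principle \emph{across} $Z_u$ and $Z_v$ --- a statement which is false, in general, for the degenerate $p$-Laplacian, and this is precisely why Theorem~\ref{main1} could only reach $\partial_{x_1}u>0$ outside $Z_u$.

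Concretely, I would differentiate the first equation of~\eqref{eq:sist} with respect to $x_1$. Writing $U:=\partial_{x_1}u$ and letting $L_u$ be the linearized operator
\[
L_u(\psi):=\mathrm{div}\big(|\nabla u|^{p-2}\nabla\psi+(p-2)|\nabla u|^{p-4}(\nabla u\cdot\nabla\psi)\,\nabla u\big),
\]
one obtains, weakly,
\[
-L_u(U)=\frac{\partial f}{\partial s}(u,v)\,U+\frac{\partial f}{\partial t}(u,v)\,\partial_{x_1}v .
\]
By the cooperativity condition~\eqref{coop}, $\frac{\partial f}{\partial t}(u,v)\ge 0$, and by Theorem~\ref{main1}, $\partial_{x_1}v\ge 0$ in $\Omega_0$; hence in $\Omega_0$
\[
-L_u(U)\ \ge\ \frac{\partial f}{\partial s}(u,v)\,U ,
\]
so $U\ge 0$ is a weak supersolution in $\Omega_0$ of a linear (weighted, possibly degenerate) elliptic equation whose zeroth order coefficient $\frac{\partial f}{\partial s}(u,v)$ is in $L^\infty_{\mathrm{loc}}$ because $f\in\mathrm{Lip}_{\mathrm{loc}}(\R^2_+)$. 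Here the restriction on $p$ enters: invoking the strong maximum principle for $L_u$ furnished by the machinery of~\cite{DS3}, on each connected component $C$ of $\Omega_0$ either $U\equiv 0$ or $U>0$. The alternative $U\equiv 0$ on $C$ is excluded by a boundary argument: it would force $u$ to be constant along every segment parallel to $e_1$ contained in $C$; extending such a segment inside $\Omega$ (licit since $\Omega$ is convex in the $x_1$-direction) up to its endpoint on $\partial\Omega$ and using $u=0$ on $\partial\Omega$ while $u>0$ in $\Omega$ gives a contradiction. Hence $\partial_{x_1}u>0$ in all of $\Omega_0$, and the identical computation with the second equation, using $\frac{\partial g}{\partial s}(u,v)\ge 0$ and $\partial_{x_1}u\ge 0$, yields $\partial_{x_1}v>0$ in $\Omega_0$.

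Since $\partial_{x_1}u>0$ in $\Omega_0$ forces $\nabla u\neq 0$ there, we get $Z_u\cap\Omega_0=\emptyset$; by the symmetry of $u$ with respect to $T_0$, also $Z_u\cap\{x_1>0\}=\emptyset$, whence $Z_u\subset T_0$, and in the same way $Z_v\subset T_0$. For the last assertion, assume $\Omega$ is symmetric with respect to $T_0^{e_i}$ for $N$ orthogonal directions $e_1,\dots,e_N$. Applying what has just been proved in each of these directions gives $Z_u\subset T_0^{e_i}$ for every $i$, hence $Z_u\subset\bigcap_{i=1}^N T_0^{e_i}=\{0\}$ because the $e_i$ form a basis of $\R^N$. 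Conversely $0\in Z_u$: for each $i$, monotonicity and symmetry in the direction $e_i$ force $s\mapsto u(s\,e_i)$ to attain its maximum at $s=0$, so $\nabla u(0)\cdot e_i=0$, and as $i$ runs over the $N$ directions this gives $\nabla u(0)=0$. The same holds for $v$, and therefore $Z_u=Z_v=\{0\}$, that is~\eqref{zxcPRIbis}.

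The crux of the whole argument --- and the only place where $\frac{2N+2}{N+2}<p,m<\infty$ is used --- is the strong maximum principle for the linearized operators $L_u$, $L_v$ through the critical sets; without that hypothesis the weight $|\nabla u|^{p-2}$ degenerates (or blows up) on $Z_u$ and no such principle is available. Making this rigorous requires the summability estimates for $|\nabla u|^{-1}$, $|\nabla v|^{-1}$ and the weighted Sobolev/Poincaré inequalities of~\cite{DS3}, together with a careful check that $U=\partial_{x_1}u$ is a legitimate weak (super)solution of the linearized equation on the whole of $\Omega_0$, and not merely on $\Omega_0\setminus Z_u$.
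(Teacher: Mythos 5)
Your proposal is essentially correct and follows the same route as the paper: differentiate the equations, use the cooperativity condition~\eqref{coop} to drop the nonnegative coupling terms and reduce to scalar weighted differential inequalities for $\partial_{x_1}u$ and $\partial_{x_1}v$, and then invoke the weighted strong maximum principle/Harnack inequality of Damascelli--Sciunzi, which is precisely what requires $\frac{2N+2}{N+2}<p,m<\infty$. The paper packages this step as Theorem~\ref{zxcPMFGSY} (a four-case dichotomy for nonnegative solutions of the linearized system, proved via~\cite[Theorem~1.1]{DS2}, not~\cite{DS3}); you make slightly more explicit both the boundary argument excluding the alternative $\partial_{x_1}u\equiv 0$ and the verification that $0\in Z_u\cap Z_v$, a point the paper leaves implicit in stating~\eqref{zxcPRIbis}.
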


\medskip
\subsection{System on a half-space of $\R^2$}
Let $\DD=\{(x,y)\in \R^2: y > 0\}$ and consider the system
\begin{equation}
	\label{eq:sist:semi}
\begin{cases}
-\D_p u= f(u,v) & \text{in $\DD$}, \\
-\D_m v= g(u,v) & \text{in $\DD$}, \\
u>0,\,\, v>0  & \text{in $\DD$},   \\
u=0,\,\, v=0 & \text{on $\pa\DD$}.
\end{cases}
\end{equation}

Then we have the following monotonicity result

\begin{theorem}
	\label{main3}
Let $(u,v)$ be a  nontrivial weak
$C^{1,\alpha}_{{\rm loc}}(\DD)$ solution of~\eqref{eq:sist:semi}.
Assume that~\eqref{libandpos} and~\eqref{coop} hold and let  $\frac{3}{2}<p,m<\infty$.
Then
$$
\frac{\partial u}{\partial y}(x,y) > 0\quad\text{and}\quad
\frac{\partial v}{\partial y}(x,y) > 0
\qquad \text{for all $(x,y) \in \overline{\DD}$}.
$$
\end{theorem}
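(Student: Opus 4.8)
The plan is to run a moving-plane argument in the $y$-direction. For $\lambda>0$ set $\DD_\lambda=\{(x,y)\in\DD:\,0<y<\lambda\}$, let $T_\lambda=\{y=\lambda\}$, and denote by $(x,y)^\lambda=(x,2\lambda-y)$ the reflection across $T_\lambda$. Write $u_\lambda(x,y)=u((x,y)^\lambda)$ and $v_\lambda(x,y)=v((x,y)^\lambda)$. Since $p$-harmonic reflection is not available, the key technical input is a weak comparison principle in narrow domains for the pair $(u,u_\lambda)$, $(v,v_\lambda)$: thanks to the cooperativity condition \eqref{coop}, the differences $w^u_\lambda:=(u-u_\lambda)^+$ and $w^v_\lambda:=(v-v_\lambda)^+$ are coupled in a way that lets one test the two equations simultaneously. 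Subtracting the equations and using the monotonicity of $f$ in $t$ and of $g$ in $s$, together with the standard inequalities $\langle|a|^{p-2}a-|b|^{p-2}b,a-b\rangle\gtrsim$ (a power of $|a-b|$ depending on whether $p\ge2$ or $1<p<2$, with weight involving $|Du|+|Du_\lambda|$ in the singular case), one gets an estimate of the form $\|\nabla w^u_\lambda\|_{p,\DD_\lambda}^p+\|\nabla w^v_\lambda\|_{m,\DD_\lambda}^m \le C\,(\text{positive combination of }\|w^u_\lambda\|,\|w^v_\lambda\|)$; Poincar\'e's inequality on the thin strip $\DD_\lambda$ (whose width is $\lambda$) then closes the estimate and forces $w^u_\lambda\equiv w^v_\lambda\equiv 0$ for $\lambda$ small, i.e. $u\le u_\lambda$ and $v\le v_\lambda$ in $\DD_\lambda$. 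The restriction $\tfrac32<p,m$ enters precisely here: in the singular range one needs the integrability of $|Du|^{p-2}$ against $|\nabla w^u_\lambda|^2$, which is exactly the summability guaranteed by the regularity theory for $p>\tfrac{2N+2}{N+2}=\tfrac32$ when $N=2$ (cf.\ the hypotheses of Theorem \ref{main2}).

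Next I would define $\Lambda=\sup\{\lambda>0:\ u\le u_\mu\ \text{and}\ v\le v_\mu\ \text{in}\ \DD_\mu\ \text{for all}\ \mu\le\lambda\}$, which is positive by the previous step. The goal is to show $\Lambda=+\infty$. Suppose $\Lambda<\infty$. By continuity $u\le u_\Lambda$, $v\le v_\Lambda$ in $\DD_\Lambda$. A strong comparison principle (Vázquez-type, applied on connected components of $\DD_\Lambda\setminus(Z_u\cup Z_v)$) upgrades this to $u<u_\Lambda$, $v<v_\Lambda$ in the interior unless $u\equiv u_\Lambda$ on a component — the latter being impossible in the half-space geometry since it would force a bounded strip of positivity contradicting $u=0$ on $\partial\DD$, or, more precisely, since the reflected function would have to vanish on a line interior to $\DD$. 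Then one shows the inequality persists slightly beyond $\Lambda$: split $\DD_{\Lambda+\tau}$ into a fixed compact part $K\Subset\DD_\Lambda$ where $u_\Lambda-u\ge\delta>0$ (so the inequality survives a small perturbation in $\lambda$ by uniform continuity) and a thin remaining part of small measure where the narrow-domain comparison principle applies again. This contradicts the maximality of $\Lambda$, hence $\Lambda=\infty$, giving $u(x,y)\le u(x,2\lambda-y)$ and likewise for $v$, for every $\lambda>0$ and $(x,y)\in\DD_\lambda$; letting the reflection parameter vary yields $\partial u/\partial y\ge 0$ and $\partial v/\partial y\ge 0$ throughout $\DD$.

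Finally, to promote $\partial u/\partial y\ge0$ to the strict inequality $\partial u/\partial y>0$ on all of $\overline{\DD}$ (including the boundary $y=0$), I would argue as follows. On $\DD$, $u$ solves $-\Delta_p u=f(u,v)>0$, so $u$ is a strict (weak) $p$-supersolution of $-\Delta_p(\cdot)=0$; the function $\partial u/\partial y$ is a nonnegative weak solution of the linearized operator $-\dvg(|Du|^{p-2}(I+(p-2)\tfrac{Du\otimes Du}{|Du|^2})\nabla(\cdot))=\partial_t f\,\partial_y v + \partial_s f\,\partial_y u\ge 0$ on the open set $\DD\setminus Z_u$, and by the Harnack inequality for this (locally uniformly elliptic) operator it cannot vanish at an interior point of a component unless it is identically zero there — which is excluded as before. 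On the critical set $Z_u$ one uses that $u$ is monotone in $y$ so $Z_u$ cannot disconnect the strip in the $y$-direction; combined with the monotonicity already proved this gives $u(x,y_2)>u(x,y_1)$ for $y_2>y_1$, hence $\partial u/\partial y>0$ wherever it exists in the interior. At the boundary $y=0$, since $u(x,0)=0$ and $u>0$ for $y>0$ with $u$ monotone, the Hopf-type lemma for the $p$-Laplacian (applicable because near a flat boundary portion where $u$ is small the equation is uniformly elliptic once we know $|Du|\ne0$ there, which follows from Hopf itself) yields $\partial u/\partial y(x,0)>0$; the same reasoning applies verbatim to $v$.

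\textbf{Main obstacle.} The delicate point is the degeneracy/singularity of the operators at the critical sets $Z_u,Z_v$: both the weak comparison principle in narrow domains and the strong comparison/Harnack step must be carried out with weights $|Du|^{p-2}+|Du_\lambda|^{p-2}$, and controlling these weights (ensuring the needed integrability, and ruling out that a critical set separates the strip) is exactly where the lower bounds $p,m>\tfrac32$ are used and where the half-space geometry — in contrast to a bounded domain — requires the separate argument in the second paragraph to start the procedure and to prevent $\Lambda$ from being finite.
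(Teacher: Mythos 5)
There is a genuine gap, and it is precisely the one the paper introduces the rotating--plane machinery (Theorem \ref{intermedio}) to avoid. Your argument transplants, almost verbatim, the bounded--domain strategy of Theorem \ref{main1}: start the plane in a thin region of small measure, define $\Lambda$ as the supremum of admissible levels, and at a putative finite $\Lambda$ split $\DD_{\Lambda+\tau}$ into a compact set $K$ where $u_\Lambda-u$ is bounded away from zero and a ``thin remaining part of small measure'' on which the narrow--domain comparison principle (Proposition \ref{pro:confr}) re-engages. In the half--space $\DD$ this split does not exist: the cap $\Sigma_{\bar\lambda}$ is an \emph{infinite} strip, so for every compact $K\Subset\Sigma_{\bar\lambda}$ the set $\Sigma_{\bar\lambda+\tau}\setminus K$ still has infinite Lebesgue measure, and Proposition \ref{pro:confr}, whose smallness threshold is expressed in terms of $\mathcal L(\O_0)$, simply cannot be applied there. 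The same objection already affects your starting step: ``Poincar\'e on the thin strip'' is offered as if width alone sufficed, but the weighted Poincar\'e inequality \eqref{eq:pu}--\eqref{eq:pv} underlying Proposition \ref{pro:confr} has a constant controlled by the \emph{measure} of the domain, not its width, and no version of it on an infinite strip is established or cited.

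The paper circumvents this by never running the comparison principle on an unbounded region. It first uses the Hopf boundary lemma to get $\partial_y u,\partial_y v>0$ on a fixed vertical segment over some $x_0$, and then invokes the moving--rotating plane technique of \cite{DS4} (Theorem \ref{intermedio}): the comparison principle is applied only on small triangles $\mathcal T_{x_0,s,\theta}$, which are bounded and of small measure, and a continuity/rotation argument propagates the pointwise inequality on the ray $\{x=x_0\}$ to the full strip $\Sigma_{\lambda'}$. The ``moving past $\bar\lambda$'' step is then a local one, too: the paper shows there is a point $\bar x$ with $\partial_y u(\bar x,\bar\lambda)>0$ and $\partial_y v(\bar x,\bar\lambda)>0$ via an even--reflection $(u_\star,u^\star)$ argument and the strong comparison principle, and feeds this one--dimensional information back into Theorem \ref{intermedio}. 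Your paragraph on upgrading $\partial_y u\geq 0$ to $\partial_y u>0$ (linearized operator plus Hopf at $y=0$) is in the spirit of the paper (Theorem \ref{zxcPMFGSY} plus \cite{PSB}), but it sits on top of a monotonicity step that, as written, does not go through in the half--space.
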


We prove Theorem \ref{main3} by exploiting a weak comparison principle in small
domains (see Proposition \ref{pro:confr}), and some techniques developed
in \cite{DS4}, where the monotonicity of the solutions was used to prove some
Liouville type theorems for Lane-Emden-Fowler type equations.

\bigskip
\vskip8pt
\begin{center}\textbf{Notations.}\end{center}
\begin{enumerate}
\item For $n\geq 1$, we denote by $|\cdot|$ the euclidean norm in $\R^n$.
\item $\R^+$ (resp.\ $\R^-$) is the set of positive (resp.\ negative) real values.
\item For $p>1$ we denote by $L^p(\R^n)$ the space of measurable functions $u$ such that
$\int_{\O}|u|^pdx<\infty$. The norm $(\int_{\O}|u|^pdx)^{1/p}$ in $L^p(\O)$ is denoted by $\|\cdot\|_{L^p(\O)}$.
\item For $s\in\N$, we denote by $H^s(\O)$ the Sobolev space of functions $u$ in $L^2(\O)$
having generalized partial derivatives $\partial_i^k u$ in $L^2(\O)$ for all $i=1,\dots, n$ and
any $0\leq k \leq s$.
\item The norm $(\int_{\O}|u|^pdx+ \int_{\O}|\nabla u|^pdx)^{1/2}$ in $W^{1,p}_0(\O)$ is denoted by $\|\cdot\|_{W^{1,p}_0(\O)}$.
\item We denote by $C_0^{\infty}(\O)$ the set of smooth compactly supported functions in $\O$.
\item We denote by $B(x_0,R)$ a ball of center $x_0$ and radius $R$.
\item We denote by $\mathcal{L}(E)$ the Lebesgue measure of the set $E\subset\R^n$.
\end{enumerate}
\medskip

\section{Proof of the results}

In the next section we shall prove the main results of the paper.

\subsection{Proof of Theorem~\ref{main1}}

First, we have the following weak comparison principle in
small  sub-domains $\Omega_0$ of $\Omega$.

\begin{proposition}
	\label{pro:confr}
Assume that $u,v \in C^1(\ov{\O})$ and $\tilde u, \tilde v \in C^1(\ov{\O})$
are solutions to~\eqref{eq:sist}. Let $\O_0$ be a bounded smooth domain of $\R^N$ such that $\O_0\subset \O$.
Then there exists a positive number $\d$, depending upon $f,g$, $\|u\|_\infty ,\|v\|_\infty,\|\tilde u\|_\infty,\|\tilde v\|_\infty$, such that if
$$
\mathcal{L}(\O_0) \leq \d,
$$
and
$$
u\leq \tilde u\quad\text{ on $\pa\O_0$}, \qquad
v\leq \tilde v\quad\text{ on $\pa\O_0$},
$$
then
$$
u\leq\tilde u\quad\text{ on $\O_0$}, \qquad
v\leq\tilde v\quad\text{ on $\O_0$}.
$$
\end{proposition}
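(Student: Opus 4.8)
We argue by comparing $u$ with $\tilde u$ and $v$ with $\tilde v$ through the test functions $w:=(u-\tilde u)^{+}$ and $z:=(v-\tilde v)^{+}$. Since $u,\tilde u\in C^{1}(\ov{\O})$ vanish on $\pa\O$, the function $w$ (extended by $0$ outside $\O_0$) belongs to $W^{1,p}_{0}(\O_0)$ and $z\in W^{1,m}_{0}(\O_0)$, so they are admissible. Using $w$ in the equations satisfied by $u$ and by $\tilde u$, subtracting, and doing the same with $z$ and the equations for $v,\tilde v$, we obtain
\[
\int_{\O_0}\big(|\n u|^{p-2}\n u-|\n\tilde u|^{p-2}\n\tilde u\big)\cdot\n w\,dx=\int_{\O_0}\big(f(u,v)-f(\tilde u,\tilde v)\big)w\,dx
\]
together with the analogous identity with $v,z,g$. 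The idea is then to estimate the right-hand sides using \eqref{coop} and the local Lipschitz continuity of $f,g$, and to absorb them into the left-hand sides by a Poincar\'e-type inequality whose constant is controlled by a positive power of $\mathcal{L}(\O_0)$; coupling the inequality for $u$ with the one for $v$ forces $w\equiv z\equiv0$.

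For the right-hand side, on the set $\{u>\tilde u\}$ write $f(u,v)-f(\tilde u,\tilde v)=\big(f(u,v)-f(\tilde u,v)\big)+\big(f(\tilde u,v)-f(\tilde u,\tilde v)\big)$. The first term is $\le L\,w$ by the Lipschitz bound, where $L$ is a Lipschitz constant for $f$ and $g$ on a compact subset of $\R^{2}_{+}$ containing the ranges of $u,v,\tilde u,\tilde v$; the second term is $\le L\,z$ when $v>\tilde v$ and is $\le0$ by \eqref{coop} when $v\le\tilde v$. Hence $f(u,v)-f(\tilde u,\tilde v)\le L(w+z)$ on $\{u>\tilde u\}$, and the symmetric estimate holds for $g$. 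Multiplying by $w$ (resp.\ $z$), integrating over $\O_0$ and using Young's inequality, the two right-hand sides are bounded by $\frac{3}{2}L\int_{\O_0}w^{2}+\frac{1}{2}L\int_{\O_0}z^{2}$ and $\frac{3}{2}L\int_{\O_0}z^{2}+\frac{1}{2}L\int_{\O_0}w^{2}$ respectively.

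Assume first $1<p,m\le2$. Since the gradients are bounded, say by $M$, the elementary inequality $\big(|a|^{p-2}a-|b|^{p-2}b\big)\cdot(a-b)\ge C_{p}(|a|+|b|)^{p-2}|a-b|^{2}\ge C_{p}(2M)^{p-2}|a-b|^{2}$ bounds the left-hand side of the $u$-identity below by $c_{1}\int_{\O_0}|\n w|^{2}$, with $c_{1}=c_{1}(p,M)>0$, and similarly by $c_{2}\int_{\O_0}|\n z|^{2}$ for the $v$-identity. By the Poincar\'e inequality together with the Faber--Krahn bound $\l_{1}(\O_0)^{-1}\le\kappa_{N}\,\mathcal{L}(\O_0)^{2/N}$ we also have $\int_{\O_0}w^{2}\le\kappa_{N}\,\mathcal{L}(\O_0)^{2/N}\int_{\O_0}|\n w|^{2}$, and likewise for $z$. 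Writing $J_{u}=\int_{\O_0}|\n w|^{2}$ and $J_{v}=\int_{\O_0}|\n z|^{2}$ and adding the two resulting chains of inequalities gives $c_{1}J_{u}+c_{2}J_{v}\le 2L\kappa_{N}\,\mathcal{L}(\O_0)^{2/N}(J_{u}+J_{v})$. It therefore suffices to choose $\d>0$---depending only on $N,p,m$, on $L$ (hence on $f,g$ and the $L^{\infty}$-norms) and on the $C^{1}$ bounds---so small that $2L\kappa_{N}\,\d^{2/N}<\min\{c_{1},c_{2}\}$: then $\mathcal{L}(\O_0)\le\d$ forces $J_{u}=J_{v}=0$, so $\n w=\n z=0$ a.e.\ in $\O_0$; since $w=z=0$ on $\pa\O_0$ we conclude $w\equiv z\equiv0$, i.e.\ $u\le\tilde u$ and $v\le\tilde v$ in $\O_0$.

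The genuinely delicate case is the degenerate regime $p>2$ (or $m>2$): there the bound $C_{p}(|a|+|b|)^{p-2}|a-b|^{2}$ no longer controls $|a-b|^{2}$, because $(|a|+|b|)^{p-2}$ degenerates on $Z_{u}\cup Z_{\tilde u}$, and the plain Poincar\'e inequality is not enough to close the estimate. The remedy is to run the same scheme in the weighted space with weight $\rho_{u}:=(|\n u|+|\n\tilde u|)^{p-2}$: the left-hand side of the $u$-identity is then bounded below by $C_{p}\int_{\O_0}\rho_{u}|\n w|^{2}$, and the term $\int_{\O_0}w^{2}$ is estimated---via the weighted Sobolev inequality for $\rho_{u}$, H\"older's inequality, and the summability of negative powers of $|\n u|+|\n\tilde u|$, which belong to the regularity theory of $p$-Laplace equations recalled in \cite{DS3}---by $C\,\mathcal{L}(\O_0)^{\g}\int_{\O_0}\rho_{u}|\n w|^{2}$ for some $\g>0$, thereby restoring the self-improving structure used above. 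Adding the (weighted, where needed) inequalities for $u$ and $v$ and taking $\mathcal{L}(\O_0)$ small gives $\rho_{u}|\n w|^{2}\equiv0$ and $\rho_{v}|\n z|^{2}\equiv0$; since $\n w$ vanishes a.e.\ also on $Z_{u}\cap Z_{\tilde u}$ (where $\n u=\n\tilde u=0$), and likewise for $z$, we again get $\n w=\n z=0$ a.e.\ in $\O_0$, hence $w\equiv z\equiv0$. This passage to the weighted functional setting in the degenerate range, together with the book-keeping that makes $\d$ depend only on $f,g$ and on the prescribed bounds for the solutions, is where the main work of the proof is concentrated.
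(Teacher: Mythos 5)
Your proposal is correct and follows essentially the same route as the paper: the same test functions $(u-\tilde u)^{+}$, $(v-\tilde v)^{+}$, the same decomposition of $f(u,v)-f(\tilde u,\tilde v)$ exploiting cooperativity and local Lipschitz continuity, the same monotonicity inequality for the $p$-Laplacian, and the same split between the regime where a plain Poincar\'e inequality suffices and the degenerate regime $p>2$ (or $m>2$) where one must pass to the weighted Poincar\'e inequality with weight $|\nabla u|^{p-2}$ (which the paper cites directly from~\cite{DS1} rather than re-deriving). The only cosmetic difference is that you use the weight $(|\nabla u|+|\nabla\tilde u|)^{p-2}$, which dominates $|\nabla u|^{p-2}$ and hence inherits the same weighted Poincar\'e estimate.
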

\begin{proof}
We consider four different cases:
\begin{enumerate}
  \item $p > 2$ and $m>2$;
  \item $p \leq 2$ and $m>2$;
  \item $p>2$ and $m\leq 2$;
  \item $p < 2$ and $m < 2$.
\end{enumerate}
We will show that the result follows in cases (1) and (2), the others cases being similar.
We will denote by $C$ a generic positive constant, which may change from line to line throughout
the proof.
\vskip2pt
\noindent
{\bf Case 1.} ($p > 2$ and $m>2$). Let us set
$$
U=(u- \tilde{u})^+\quad\text{and}\quad V=(v-\tilde{v})^+.
$$
We will prove the result by showing that, actually, it holds $U\equiv V\equiv 0$.
Since both $u \leq\tilde{u}$ on $\pa\O_0$ and $v \leq \tilde{v}$ on $\pa\O_0$ then
the functions $U, V$ belong to $W^{1,p}_0(\O_0)$. Therefore, let us consider the
variational formulations of the equations of~\eqref{eq:sist}.
\begin{align}
	\label{eq:wu}
& \int_\O|\n u|^{p-2}(\n u,\n \vp) dx= \int_\O f(u,v) \vp dx, \quad
\forall \vp \in C^\infty_c(\O), \\
	\label{eq:wtu}
& \int_\O|\n \wt{u}|^{p-2}(\n \wt{u},\n \vp) dx= \int_\O
f(\wt{u},\wt{v}) \vp dx, \quad \forall \vp \in C^\infty_c(\O), \\
&	
\label{eq:wv}
\int_\O|\n v|^{m-2}(\n v,\n \vp) dx= \int_\O g(u,v) \vp dx, \quad
\forall \vp \in C^\infty_c(\O), \\
&	
\label{eq:wtv}
\int_\O|\n \wt{v}|^{m-2}(\n \wt{v},\n \vp) dx= \int_\O
g(\wt{u},\wt{v}) \vp dx, \quad \forall \vp \in C^\infty_c(\O).
\end{align}
By a density argument, we can put respectively $\vp=U$ in
equations~\eqref{eq:wu} and~\eqref{eq:wtu} and $\vp=V$ in
equations~\eqref{eq:wv} and~\eqref{eq:wtv}. Subtracting, we get
\begin{align}
	\label{eq:sub1}
& \int_{\O_0}\big( |\n u|^{p-2}\n u -|\n\wt{u}|^{p-2}\n \wt{u},
\n (u-\wt{u})^+
\big)dx=\int_{\O_0}[f(u,v)-f(\wt{u},\wt{v})](u-\wt{u})^+dx, \\
	\label{eq:sub2}
& \int_{\O_0}\big( |\n v|^{m-2}\n v -|\n\wt{v}|^{m-2}\n \wt{v}, \n
(v-\wt{v})^+
\big)dx=\int_{\O_0}[g(u,v)-g(\wt{u},\wt{v})](v-\wt{v})^+dx.
\end{align}
Now we use the following standard estimate
$$
(|\eta|^{q-2}\eta-|\eta'|^{q-2}\eta',\eta-\eta')\geq C (|\eta|+|\eta'|)^{q-2}|\eta-\eta'|^2,
$$
for all $\eta,\eta' \in \R^N$ with $|\eta|+|\eta'|>0$ and $q>1$,
from equations~\eqref{eq:sub1} and~\eqref{eq:sub2} one has that
\begin{align}
	\label{eq:sub11}
& \int_{\O_0}( |\n u|+|\n\wt{u}|)^{p-2}|\n (u-\wt{u})^+|^2
dx\leq C\int_{\O_0}[f(u,v)-f(\wt{u},\wt{v})](u-\wt{u})^+dx, \\
	\label{eq:sub22}
& \int_{\O_0}( |\n v|+|\n\wt{v}|)^{m-2}|\n (v-\wt{v})^+|^2
dx\leq C\int_{\O_0}[g(u,v)-g(\wt{u},\wt{v})](v-\wt{v})^+dx.
\end{align}
Since $f$ is locally lipschitz continuous, from equation~\eqref{eq:sub11} it follows
\begin{align}
	\label{eq:sub111}\nonumber
\int_{\O_0} |\n u|^{p-2}|\n (u-\wt{u})^+|^2 dx
&\leq
C\int_{\O_0}\Big[\frac{f(u,v)-f(\wt{u},v)}{u-\wt{u}}\Big]((u-\wt{u})^+)^2dx\\\nonumber
&+C\int_{\O_0}\Big[\frac{f(\wt{u},v)-f(\wt{u},\wt{v})}{(v-\wt{v})^+}\Big](u-\wt{u})^+(v-\wt{v})^+dx\\
\nonumber &\leq C\Big
(\int_{\O_0}((u-\wt{u})^+)^2dx+\int_{\wt{\O}}(u-\wt{u})^+(v-\wt{v})^+dx\Big)
\\ &\leq C
\Big(\int_{\O_0}((u-\wt{u})^+)^2dx+\int_{\wt{\O}}((v-\wt{v})^+)^2dx\Big),
\end{align}
where, of course, in the last inequality we have used Young's inequality.
Arguing in the same fashion, since $g$ is locally lipschitz continuous as well,
from equation~\eqref{eq:sub22} one deduces
\begin{equation}\label{eq:sub222}
\int_{\O_0} |\n v|^{m-2}|\n (v-\wt{v})^+|^2 dx\leq C
\Big(\int_{\O_0}((u-\wt{u})^+)^2dx+\int_{\wt{\O}}((v-\wt{v})^+)^2dx\Big).
\end{equation}
We know that a weighted Poincar\'{e} inequality holds true (cf.~\cite{DS1}), that yields
\begin{align}
	\label{eq:pu}
& \int_{\O_0} ((u-\wt{u})^+)^2dx\leq C_1(\O_0)\int_{\O_0} |\n
u|^{p-2}|\n (u-\wt{u})^+|^2dx, \\
	\label{eq:pv}
& \int_{\O_0} ((v-\wt{v})^+)^2dx\leq C_2(\O_0)\int_{\O_0} |\n
v|^{m-2}|\n (v-\wt{v})^+|^2dx,
\end{align}
where $C_1(\O_0)\to 0$, when $\mathcal{L}(\O_0)\to 0$, as well as $C_2(\O_0)\to 0$, for
$\mathcal{L}(\O_0)\to 0$. In turn, by combining inequalities~\eqref{eq:sub111}
and~\eqref{eq:sub222}, and setting
$$
C_{\Omega_0}=C\max\{C_1(\O_0),C_2(\O_0)\},
$$
we conclude that
\begin{align*}
& \int_{\wt{\O}} |\n u|^{p-2}|\n (u-\wt{u})^+|^2 dx\leq
C_{\Omega_0}\Big(\int_{\wt{\O}}|\n u|^{p-2}|\n(u-\wt{u})^+|^2dx+\int_{\wt{\O}}|\n
v|^{m-2}|\n(v-\wt{v})^+|^2dx\Big), \\
& \int_{\wt{\O}} |\n v|^{m-2}|\n (v-\wt{v})^+|^2 dx\leq
C_{\Omega_0}\Big (\int_{\wt{\O}}|\n
u|^{p-2}|\n(u-\wt{u})^+|^2dx+\int_{\wt{\O}}|\n
v|^{m-2}|\n(v-\wt{v})^+|^2dx\Big).
\end{align*}
By adding these equations, and setting
$$
I(\O_0)=\int_{\O_0} |\n u|^{p-2}|\n (u-\wt{u})^+|^2 dx+\int_{\O_0} |\n v|^{m-2}|\n (v-\wt{v})^+|^2 dx,
$$
we obtain
\begin{equation}
	\label{eq:sub12}
I(\O_0)\leq C_{\Omega_0} I(\O_0).
\end{equation}
Now, we choose the value of $\d>0$ so small that the condition $\mathcal L(\O_0)\leq\d$ implies
$C_{\Omega_0}<1$. Therefore, from equation~\eqref{eq:sub12}, we get the
desired contradiction. In turn, we get
$$
(u-\wt{u})^+ \equiv 0 \quad\text{and} \quad (v-\wt{v})^+ \equiv 0,
$$
concluding the proof in this case.
\vskip4pt
\noindent
{\bf Case 2.} ($p \leq 2$ and $m>2$). Since $p\leq 2$ and $u \in C^{1, \a}(\ov{\O})$,
then equation~\eqref{eq:sub11} gives
\begin{equation}
\label{eq:sub11c2}
\int_{\O_0}|\n (u-\wt{u})^+|^2 dx\leq C\int_{\O_0}[f(u,v)-f(\wt{u},\wt{v})](u-\wt{u})dx.
\end{equation}
Then, since $f$ is locally lipschitz continuous, via the standard Poincar\'{e}
inequality and the weighted Poincar\'e inequality~\eqref{eq:pv}, from
inequality~\eqref{eq:sub11c2} one has
\begin{equation*}
\int_{\O_0}|\n (u-\wt{u})^+|^2 dx\leq CC_1(\O_0)\Big
(\int_{\O_0}|\n(u-\wt{u})^+|^2+\int_{\O_0}|\n v|^{m-2}|\n
(v-\wt{v})^+|^2dx\Big).
\end{equation*}
In the very same way, one gets
\begin{equation*}
\int_{\O_0} |\n v|^{m-2}|\n (v-\wt{v})^+|^2 dx \leq
CC_2(\O_0)
\Big(\int_{\O_0}|\n(u-\wt{u})^+|^2dx+\int_{\O_0}|\n
v|^{m-2}|\n(v-\wt{v})^+|^2dx\Big).
\end{equation*}
Adding these equations, setting
$$
J(\O_0)=\int_{\O_0}|\n (u-\wt{u})^+|^2+\int_{\O_0} |\n v|^{m-2}|\n
(v-\wt{v})^+|^2 dx,
$$
yields
\begin{equation*}
J(\O_0)\leq C_{\O_0} J(\O_0).
\end{equation*}
Arguing as before for the case $p,m>2$, by choosing $\d$ sufficiently small
that $C_{\O_0}<1$, we get the desired contradiction, concluding the proof.
\end{proof}

\medskip

Let us now recall the fundamental ingredients of the moving plane method.
Let $\O$ be a bounded smooth domain contained in $\R^N$. Let us
consider a direction, say $x_1$ for example.  We set
$$
T_\lambda:=\{x \in \R^N: x_1= \lambda \}.
$$
Given $x\in \R^N$ and $\lambda <0$ for semplicity, we define
$$
x_\lambda:= (2\lambda-x_1, x_2,\ldots,x_N),\quad
u_\lambda(x):=u(x_\lambda),\quad
v_\lambda(x):=v(x_\lambda),
$$
and
$$
\O_\lambda:=\big \{x\in \O: x_1<\lambda \big\}.
$$
We also set
\begin{equation}
	\label{eq:capitalLambda}
	\Lambda :=\sup \big\{\lambda \in \R: \text{$x\in \O_t$ implies
	$x_\lambda \in \O$ for all $t\leq \lambda$} \big \},
	\qquad a:=\inf_{x\in \O} x_1.
\end{equation}
Finally, we set
\begin{equation*}
Z_{u,\lambda}:=\{x\in \Omega _{\lambda}: \nabla u(x)=\nabla u_{\lambda}(x)=0\},  \quad
Z_{v,\lambda}:=\{x\in \Omega _{\lambda}:\nabla v(x)=\nabla v_{\lambda}(x)=0\}.
\end{equation*}

We have the following

\begin{proposition}
Assume that~\eqref{libandpos} and~\eqref{coop} hold, and $1<p,m<\infty$.\\
Let $(u,v)\in C^{1,\a}(\ov{\O})\times C^{1,\a}(\ov{\O})$ be a solution to system~\eqref{eq:sist} and let
$\Lambda$ be as in~\eqref{eq:capitalLambda}. Then, for any $a\leq\lambda\leq \Lambda$, we have
\begin{equation}
	\label{zxclu:1.8222}
u(x)\leq u_{\lambda}(x)\quad\text{and}\quad v(x)\leq v_{\lambda}(x),\qquad \text{for all $x \in
\Omega _{\lambda}$}.
\end{equation}
Moreover, for any $\lambda$ such that $a<\lambda <\Lambda$, we have
\begin{equation}\label{zxcNEVFURu}
u(x) < u_{\lambda}(x),\qquad \text{for all $x \in \Omega_{\lambda}\setminus Z_{u,\lambda}$},
\end{equation}
and
\begin{equation}\label{zxcNEVFURv}
v(x) < v_{\lambda}(x),\qquad \text{for all  $x \in \Omega _{\lambda}\setminus Z_{v,\lambda}$}.
\end{equation}
Finally, it holds
\begin{equation}\label{zxclu:1.9222u}
\frac{\partial u}{\partial x_1}(x) \geq 0, \qquad \text{for all $x \in \Omega _{\Lambda}$},
\end{equation}
where $Z_u = \{x \in \Omega : \nabla u (x)=0\}$, and
\begin{equation}
	\label{zxclu:1.9222v}
\frac{\partial v}{\partial x_1}(x) \geq 0, \qquad \text{for all $x \in \Omega _{\Lambda}$}.
\end{equation}
\end{proposition}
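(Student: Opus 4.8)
The plan is to carry out the moving plane method in the $x_1$-direction, using Proposition~\ref{pro:confr} to both initiate and continue the procedure, with a strong comparison principle for the two (possibly degenerate) equations supplying the qualitative information in between. First I record that, since $-\Delta_p,-\Delta_m$ commute with reflections and $f,g$ are independent of $x$, the reflected pair $(u_\lambda,v_\lambda)$ solves the same system~\eqref{eq:sist} on $\{x:x_\lambda\in\Omega\}\supseteq\Omega_\lambda$ whenever $\lambda\le\Lambda$; moreover, by the cooperativity~\eqref{coop}, the differences $w_u:=u_\lambda-u$ and $w_v:=v_\lambda-v$ satisfy a cooperative system of differential inequalities. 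Then I set
$$
\lambda_0:=\sup\bigl\{\lambda\in(a,\Lambda]:\ u\le u_t\ \text{and}\ v\le v_t\ \text{in}\ \Omega_t\ \text{for all}\ t\in(a,\lambda]\bigr\}.
$$
For $\lambda-a$ small, $\Omega_\lambda$ has measure below the threshold $\delta$ of Proposition~\ref{pro:confr}, and on $\partial\Omega_\lambda$ one has $u\le u_\lambda$, $v\le v_\lambda$ (on $\overline{\Omega_\lambda}\cap\partial\Omega$ since $u=v=0$ there while $u_\lambda,v_\lambda\ge 0$, and on $\overline{\Omega_\lambda}\cap T_\lambda$ with equality); hence Proposition~\ref{pro:confr} applies and shows $\lambda_0>a$, and by continuity of $u,v$ on $\overline\Omega$, \eqref{zxclu:1.8222} also holds at $\lambda=\lambda_0$. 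It remains to prove that $\lambda_0=\Lambda$.

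Assume, for contradiction, that $\lambda_0<\Lambda$. On $\Omega_{\lambda_0}$ we have $w_u,w_v\ge 0$; writing
$$
f(u,v)-f(u_{\lambda_0},v_{\lambda_0})=c_u(x)\,w_u-\bigl[f(u_{\lambda_0},v_{\lambda_0})-f(u_{\lambda_0},v)\bigr]
$$
with $c_u\in L^\infty(\Omega_{\lambda_0})$ (local Lipschitz continuity of $f$, boundedness of $u,u_{\lambda_0}$) and the bracket nonnegative by~\eqref{coop}, one sees that $w_u$ satisfies a linear differential inequality whose principal part is uniformly elliptic on every region where $|\nabla u|+|\nabla u_{\lambda_0}|>0$, that is, away from $Z_{u,\lambda_0}$. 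The strong maximum principle then forces, on each connected component of $\Omega_{\lambda_0}\setminus Z_{u,\lambda_0}$, that either $w_u\equiv 0$ or $w_u>0$; symmetrically for $w_v$ on $\Omega_{\lambda_0}\setminus Z_{v,\lambda_0}$. The alternative $w_u\equiv 0$ (resp.\ $w_v\equiv 0$) on such a component is excluded because $\lambda_0<\Lambda$: picking a point of $\partial\Omega$ on the boundary of the component whose reflection in $T_{\lambda_0}$ lies in the interior of $\Omega$, one has $u_{\lambda_0}>0$ there while $u=0$, contradicting $w_u\equiv 0$ (Hopf's lemma at that boundary point makes the argument rigorous). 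Hence $u<u_{\lambda_0}$ on $\Omega_{\lambda_0}\setminus Z_{u,\lambda_0}$ and $v<v_{\lambda_0}$ on $\Omega_{\lambda_0}\setminus Z_{v,\lambda_0}$.

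Next I push the plane a little past $\lambda_0$. Because $f,g>0$ and $u,v>0$ in $\Omega$, the regularity theory for $-\Delta_p u=f(u,v)$ (see \cite{DS3}, \cite{DS1}) gives $\mathcal{L}(Z_u)=\mathcal{L}(Z_v)=0$, whence $\mathcal{L}(Z_{u,\lambda_0})=\mathcal{L}(Z_{v,\lambda_0})=0$; so I may fix a compact $K\subset\Omega_{\lambda_0}$ with $\mathcal{L}(\Omega_{\lambda_0}\setminus K)<\delta/2$ on which $w_u\ge c_0$ and $w_v\ge c_0$ for some $c_0>0$. By uniform continuity, for $\lambda\in(\lambda_0,\lambda_0+\varepsilon)$ with $\varepsilon$ small one still has $u_\lambda-u\ge c_0/2$ and $v_\lambda-v\ge c_0/2$ on $K$, while $\mathcal{L}(\Omega_\lambda\setminus K)<\delta$. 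On the open set $\Omega_\lambda\setminus K$ one has $u\le u_\lambda$ and $v\le v_\lambda$ on the boundary (zero data on $\partial\Omega$, equality on $T_\lambda$, strict inequality on $\partial K\cap\Omega_\lambda$), so Proposition~\ref{pro:confr} applies there (its proof only requires that the relevant positive parts belong to $W^{1,p}_0$ of the set, guaranteed by the boundary ordering, together with the measure bound) and yields $u\le u_\lambda$, $v\le v_\lambda$ on $\Omega_\lambda\setminus K$, hence on all of $\Omega_\lambda$ --- contradicting the definition of $\lambda_0$. Therefore $\lambda_0=\Lambda$ and~\eqref{zxclu:1.8222} is proved. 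The strict inequalities~\eqref{zxcNEVFURu}--\eqref{zxcNEVFURv} for $a<\lambda<\Lambda$ now follow by repeating the strong-comparison and boundary argument above with $\lambda$ in place of $\lambda_0$, and~\eqref{zxclu:1.9222u}--\eqref{zxclu:1.9222v} are immediate: for $x=(x_1,x')\in\Omega_\Lambda$ and $0<h<\Lambda-x_1$, \eqref{zxclu:1.8222} with $\lambda=x_1+h$ gives $u(x_1,x')\le u(x_1+2h,x')$ and $v(x_1,x')\le v(x_1+2h,x')$, so dividing by $h$ and letting $h\to 0^+$ yields $\partial u/\partial x_1\ge 0$ and $\partial v/\partial x_1\ge 0$ at $x$.

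The step I expect to be the main obstacle is the one in the middle: setting up a usable strong comparison principle for the coupled, degenerate system and, above all, controlling its critical sets $Z_{u,\lambda}$, $Z_{v,\lambda}$ --- ruling out the degenerate alternative $w_u\equiv 0$ on every component of $\Omega_\lambda\setminus Z_{u,\lambda}$, including those whose closure is interior to $\Omega_\lambda$, and establishing that $\mathcal{L}(Z_u)=\mathcal{L}(Z_v)=0$ for the covering argument. It is precisely this degeneracy that forces~\eqref{zxcNEVFURu}--\eqref{zxcNEVFURv} to be stated only on $\Omega_\lambda\setminus Z_{u,\lambda}$ rather than on all of $\Omega_\lambda$, the latter requiring the finer gradient integrability available only in the restricted exponent range of Theorem~\ref{main2}.
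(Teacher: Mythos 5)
Your overall strategy is the paper's: run the moving plane, use Proposition~\ref{pro:confr} on thin slabs to start, pass to the supremum $\lambda_0$, derive a contradiction from $\lambda_0<\Lambda$ by showing strict inequality off the critical set and then relaunching Proposition~\ref{pro:confr} past $\lambda_0$ on a compact exhausting set of almost full measure, and finally read off the monotonicity. The setup with a constant $\Lambda$ so that $s\mapsto f(s,t)+\Lambda s$ and $t\mapsto g(s,t)+\Lambda t$ are monotone, and the appeal to Damascelli's strong comparison principle, also match the paper.

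The one genuine gap is precisely the step you flagged yourself: ruling out $w_u\equiv 0$ on a connected component $\mathcal{C}$ of $\Omega_{\lambda_0}\setminus Z_{u,\lambda_0}$. Your argument picks a point of $\partial\Omega$ on $\partial\mathcal{C}$ with reflection interior to $\Omega$ and compares $u=0$ with $u_{\lambda_0}>0$ there. This disposes of a component whose boundary reaches $\partial\Omega$, but it says nothing about a component whose closure sits in the interior of $\Omega_{\lambda_0}$ and is enclosed by the critical set (plus possibly $T_{\lambda_0}$) --- exactly the situation you identify as the obstacle. The paper's mechanism for this case is different and is the real ingredient: if $u\equiv u_{\lambda_0}$ on such a $\mathcal{C}$, then $\partial\mathcal{C}\setminus T_{\lambda_0}\subseteq Z_u$, its reflection across $T_{\lambda_0}$ is also in $Z_u$, and hence $\Omega\setminus Z_u$ would be disconnected; this is impossible by the structure results on the critical set of $p$-Laplace solutions in \cite{DS1,DS2} (in particular the connectedness of $\Omega\setminus Z_u$). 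Without that input, the contradiction does not close for interior components, and consequently the positivity $w_u>0$, $w_v>0$ needed to pick the compact $K$ in your covering step is not justified on all of $\Omega_{\lambda_0}\setminus Z_{u,\lambda_0}$. Everything else (the a.e.\ vanishing of $Z_u,Z_v$, the pushing past $\lambda_0$ by uniform continuity, and the derivation of $\partial u/\partial x_1\ge 0$, $\partial v/\partial x_1\ge 0$ from the reflected inequalities) is sound and matches the paper's proof up to the minor cosmetic difference that the paper first removes an open neighbourhood $A$ of the critical sets and then works on $K\setminus A$, whereas you absorb that into the choice of $K$.
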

\begin{proof}
For $a<\lambda<\Lambda$ and $\lambda$ sufficiently close to $a$, we  assume that
$\mathcal{L}(\O_\lambda)$ is as small as we need. In particular, we may assume that
Proposition~\ref{pro:confr} works with $\Omega_0=\Omega_\lambda$. Therefore, we set
$$
W_\lambda :=u-u_\lambda
\quad
\text{and}
\quad
H_\lambda :=v-v_\lambda,
$$
and we observe that, by construction, we have
\begin{equation*}
W_\lambda \leq 0\quad\text{on $\partial\O_\lambda$}
\quad
\text{and}
\quad
H_\lambda \leq 0\quad\text{on $\partial\O_\lambda$}.
\end{equation*}
In turn, by Proposition~\ref{pro:confr}, it follows that
\begin{equation*}
W_\lambda \leq 0\quad\text{in $\O_\lambda$}
\quad
\text{and}
\quad
H_\lambda \leq 0\quad\text{in $\O_\lambda$}.
\end{equation*}
We now define the set
\begin{equation}
\Lambda_0^{u,v}=\big\{\lambda > a: \text{$u\leq u_t$ and $v\leq v_t$ for all $t\in(a,\lambda]$}\big\}.
 \end{equation}
and
 \begin{equation}
\lambda_0=\sup\Lambda_0^{u,v}.
\end{equation}
Note that by continuity, we have $u\leq u_{\lambda_0}$ and $v\leq v_{\lambda_0}$.
We have to show that actually $\lambda_0 =\Lambda$. Hence, assume that by contradiction
$\lambda_0 <\Lambda$ and argue as follows. Let  $A$ be an open set such that
$$
Z_u\cap\Omega_{\lambda_0} \subset A \subset\Omega_{\lambda_0}
$$
and
$$
Z_v\cap\Omega_{\lambda_0} \subset A \subset\Omega_{\lambda_0}.
$$
Note that since $|Z_u|=|Z_v|=0$ (see~\cite[Theorem 2.2]{DS3} and the references therein), we can choose $A$ as small as we like.
Notice now that, since $f$ and $g$ are locally Lipschitz continuous, there exists a
positive constant $\Lambda$ such that
\begin{equation}\label{pranzo}
\frac{\partial f}{\partial s}(s,t)+\Lambda \geq 0,
\quad
\text{and}
\quad
\frac{\partial g}{\partial t}(s,t)+\Lambda \geq 0,
\qquad
\text{for all $s,t>0$}.
\end{equation}
Furthermore, $\frac{\partial f}{\partial t}(s,t)$ and $\frac{\partial g}{\partial s}(s,t)$
are non-negative for $s,t>0$, by assumption. Consequently,
\begin{align}
	\label{eq:confr}
-\Delta_p u + \Lambda u &=f(u,v)+ \Lambda u \leq  f(u_\lambda,v_\lambda)+\Lambda u_\lambda
= -\Delta_p u_\lambda + \Lambda u_\lambda , \\
\label{eq:confr2}
-\Delta_m v + \Lambda v &=g(u,v) + \Lambda v
\leq g(u_\lambda,v_\lambda)  + \Lambda v_\lambda
= -\Delta_m v_\lambda + \Lambda v_\lambda,
\end{align}
for any  $a\leq \lambda\leq \lambda_0$. In light
of~\eqref{eq:confr}-\eqref{eq:confr2}, we are able to write
\begin{equation}\label{eq:sist1}
\begin{cases}
-\D_p u  + \Lambda u\leq -\D_p u_\lambda  + \Lambda u_\lambda & \text{in $\O_\lambda$},\\
\qquad   u \leq u_\lambda & \text{in $\O_\lambda$}, \\
-\D_m v\, +\, \Lambda v\leq -\D_m v_\lambda \, +\, \Lambda v_\lambda & \text{in $\O_\lambda$}, \\
\qquad  v \leq v_\lambda & \text{in $\O_\lambda$}.
\end{cases}
\end{equation}
Then, by~\eqref{eq:sist1}, and a strong comparison principle \cite[Theorem 1.4]{Da2}, we get
$$
u<u_{\lambda_0} \qquad\text{or}\qquad  u\equiv u_{\lambda_0},
$$
in any connected
 component of $\Omega_{\lambda_{0}}\setminus Z_u$, and
$$
v < v_{\lambda_0}\qquad\text{or}\qquad v\equiv v_{\lambda_0},
$$
in any connected component of $\Omega_{\lambda_{0}}\setminus Z_u$.
We claim that
\begin{center}
The case $u\equiv u_{\lambda_0}$ in some  connected component $\mathcal{C}$ of $\Omega_{\lambda_{0}}\setminus Z_u$ is not possible.
\end{center}
In fact, by construction, it is $\partial \mathcal{C}\setminus T_{\lambda_0}\subseteq Z_u$.
If $u\equiv u_{\lambda_0}$, also the reflection of $\partial \mathcal{C}\setminus T_{\lambda_0}$
with respect to $T_{\lambda_0}$ in contained in $Z_u$. Consequently $\Omega\setminus Z_u$ would not be connected
that is a contradiction (see \cite{DS1,DS2}).
Consequently
\begin{equation}
	\label{pippopappo1}
u<u_{\lambda_0},
\end{equation}
in any connected component of $\Omega_{\lambda_{0}}\setminus Z_u$.
In the very same way, we get
\begin{equation}
	\label{pippopappo}
v<v_{\lambda_0}
\end{equation}
in any connected component of $\Omega_{\lambda_{0}}\setminus Z_v$.
Consider now a compact set $K$ in $\Omega_{\lambda_{0}}$ such that
${\mathcal L}(\Omega_{\lambda_{0}}\setminus K)$ is sufficiently small so that
Proposition~\ref{pro:confr} can be applied. By what we proved before, $u_{\lambda_{0}}-u$ and
$v_{\lambda_{0}}-v$ are positive in $K \setminus A$, which is compact. Then, by continuity,
we find $\epsilon>0$ such that, $\lambda_0+\epsilon < \Lambda$ and for
$\lambda<\lambda_0+\epsilon$ we have that ${\mathcal L}(\Omega_{\lambda}\setminus (K\setminus A))$ is still sufficiently
small as before, and $u_{\lambda}-u >0$ in $K \setminus A$, $v_{\lambda}-v > 0$ in $K \setminus
A$. In particular $u_{\lambda}-u >0$ and $v_{\lambda}-v >0$ on $\partial(K \setminus A)$.
Consequently $u \leq u_\lambda$ and $v \leq v_\lambda$ on
$\partial(\Omega_\lambda\setminus(K\setminus A))$. By Proposition~\ref{pro:confr} it follows
$u\leq u_\lambda$ and $v \leq v_\lambda$ in $\Omega_\lambda\setminus(K\setminus A)$ and,
consequently in $\Omega_\lambda$, which contradicts the assumption $\lambda_0<\Lambda$.
Therefore $\lambda_0 \equiv \Lambda$ and the thesis is proved.
The proof of~\eqref{zxcNEVFURu} and~\eqref{zxcNEVFURv} follows by the strong
comparison theorem exploited as above immediately as above, see~\eqref{pippopappo1} and~\eqref{pippopappo}.
Finally~\eqref{zxclu:1.9222u} and~\eqref{zxclu:1.9222v} follow by the monotonicity of the
solution, which is implicit in the above arguments.
\end{proof}

\medskip
\subsection{Proof of Theorem~\ref{main2}}

First, we give the following definition (cf.~\cite{DS1,DS2,DS3}).

\begin{definition}
Let $\rho \in L^{1}(\Omega)$ and $1\leq q<\infty$. The space $H^{1,q}_\rho(\Omega)$ is
defined as the completion of $C^1(\overline{\Omega})$ (or $C^{\infty }(\overline{\Omega})$) under the norm
  \begin{equation}\label{hthInorI}
\| v\|_{H^{1,q}_\rho}= \| v\|_{L^q (\Omega)}+\| \nabla v\|_{L^q (\Omega, \rho)},
\end{equation}
 where
$$
\|\nabla v\|^q_{L^p (\Omega, \rho)}:=\int_{\Omega}|\nabla v(x)|^q \rho(x) dx.
$$
We also recall that $H^{1,q}_{\rho}(\Omega)$ may be equivalently defined as the space of
functions having  distributional derivatives represented by a function for which the norm defined in
\eqref{hthInorI} is bounded. These two definitions are equivalent if the domain has piecewise regular boundary.
\end{definition}

\vskip2pt
\noindent
If $(u,v)\in C^1(\overline{\Omega})\times C^1(\overline{\Omega} )$ is a weak
solution of~\eqref{eq:sist}, then we have
\[
L_{(u,v)}((u_{x_i},v_{x_j}), (\varphi ,\psi)) \equiv (L^1_{(u,v)}((u_{x_i},v_{x_j}),(\varphi ,\psi )),
L^2_{(u,v)}((u_{x_i},v_{x_j}),(\varphi ,\psi)),
\]
where we have set, for $1<p,m <\infty$,
\begin{align*}
L^1_{(u,v)}((u_{x_i},v_{x_j}),(\varphi ,\psi ))& =
\int_{\Omega}|\nabla u|^{p-2}(\nabla u_{x_i},\nabla \varphi)
 +(p-2)\int_{\Omega}|\nabla u|^{p-4}(\nabla u,\nabla u_{x_i})(\nabla u,\nabla \varphi) \\
&-\int_{\Omega}\Big[\frac{\partial f}{\partial s}(u,v) u_{x_i}+ \frac{\partial f}{\partial t}(u,v) v_{x_i}\Big]\varphi \,dx,
\end{align*}
and
\begin{align*}
L^2_{(u,v)}((u_{x_i},v_{x_j}),(\varphi ,\psi )) &=
\int_{\Omega}|\nabla v|^{m-2}(\nabla v_{x_i},\nabla \psi)+(m-2)\int_{\Omega}|\nabla v|^{m-4}(\nabla v,\nabla v_{x_i})(\nabla v,\nabla \psi)  \\
&- \int_{\Omega}\Big[\frac{\partial g}{\partial s}(u,v) u_{x_i}+ \frac{\partial g}{\partial t}(u,v) v_{x_i}\Big]\psi \,dx,
\end{align*}
for any $\varphi,\psi\,\in C^1_0 (\Omega)$.
Moreover, the following equation holds
\begin{equation}
	\label{zxcLI:VISY}
L_{(u,v)}((u_{x_i},v_{x_j}), (\varphi ,\psi)) =0,
\qquad\text{for all $(\varphi ,\psi)$ in $H^{1,2}_{0, \rho _u}(\Omega)\times
H^{1,2}_{ 0,\rho _v}(\Omega)$},
\end{equation}
and all $i,j=1,\ldots ,N$, where
$$
\rho_u(x):= |\n u(x)|^{p-2},\qquad \rho_v(x):=|\n v(x)|^{m-2}.
$$
More generally, if $(w,h)\in H^{1,2}_{ \rho _u}(\Omega)\times
H^{1,2}_{\rho _v}(\Omega)$, we can define
$L_{(u,v)}((w,h), (\varphi ,\psi))$ as above.

\vskip6pt
\noindent
An immediate consequence is the following

\begin{theorem}
	\label{zxcPMFGSY}
Assume that~\eqref{libandpos} and~\eqref{coop} hold and that $\frac{2N+2}{N+2}<p,m<\infty$.
Let
$$
(w,h)\in H^{1,2}_{ \rho _u}\cap C(\Omega) \times H^{1,2}_{\rho _v}\cap C(\Omega)
$$
be a nonnegative weak solutions of
$$
L_{(u,v)}((w,h), (\varphi ,\psi))=0,\qquad\forall\varphi,\psi\,\in C^1_0 (\Omega).
$$
Then, for any domain $\Omega '\subset\Omega$ with $w\geq 0$ in $\Omega '$ and $h\geq 0$ in $\Omega '$,
one of the following four cases occurs
\begin{itemize}
 \item [(i)] $w>0$ and $h \equiv 0$ in $\O'$;
  \item [(ii)] $w>0$ and $h>0$ in $\O'$;
  \item [(iii)] $w \equiv 0$ and $h>0$ in $\O'$;
 \item [(iv)] $w\equiv 0$ and $h\equiv 0$ in $\O'$.
 \end{itemize}
\end{theorem}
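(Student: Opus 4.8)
The plan is to \emph{decouple} the system: the cooperativity~\eqref{coop}, together with the sign hypothesis $w,h\ge 0$, turns each of the two linearized equations into a single linear \emph{degenerate} (or singular) elliptic inequality with a nonnegative right-hand side, to which a strong maximum principle applies. Since $\Omega'$ is a domain, hence connected, it then suffices to establish the two dichotomies
\[
\text{(a)}\quad w\equiv 0\ \text{ or }\ w>0\ \text{ in }\Omega',\qquad\qquad
\text{(b)}\quad h\equiv 0\ \text{ or }\ h>0\ \text{ in }\Omega',
\]
because the four alternatives (i)--(iv) in the statement are precisely the four combinations of (a) and (b).

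First I would prove (a). Testing only the first component of $L_{(u,v)}((w,h),(\varphi,\psi))=0$, the definition of $L^1_{(u,v)}$ says exactly that $w$ is a weak solution of
\[
-\mathscr L_u w=\frac{\partial f}{\partial s}(u,v)\,w+\frac{\partial f}{\partial t}(u,v)\,h\qquad\text{in }\Omega',
\]
where $\mathscr L_u$ is the linearization of the $p$-Laplacian at $u$, i.e. the operator whose principal part is $\langle -\mathscr L_u w,\varphi\rangle=\int_\Omega|\nabla u|^{p-2}(\nabla w,\nabla\varphi)+(p-2)\int_\Omega|\nabla u|^{p-4}(\nabla u,\nabla w)(\nabla u,\nabla\varphi)$; this principal part is uniformly elliptic on $\{\nabla u\ne 0\}$ with ellipticity constants comparable to the weight $\rho_u=|\nabla u|^{p-2}$. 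By~\eqref{coop} we have $\frac{\partial f}{\partial t}(u,v)\ge 0$ in $\Omega'$, and $h\ge 0$ there by hypothesis, so the coupling term is nonnegative and
\[
-\mathscr L_u w-\frac{\partial f}{\partial s}(u,v)\,w\ \ge\ 0\qquad\text{in }\Omega'.
\]
Since $u,v\in C^1(\overline\Omega)$ with $u,v>0$ in $\Omega$ and $f\in\mathrm{Lip}_{\mathrm{loc}}(\R^2_+)$, the zero-order coefficient $\frac{\partial f}{\partial s}(u,v)$ lies in $L^\infty_{\mathrm{loc}}(\Omega)$; absorbing a large multiple of $w$ as in~\eqref{pranzo} reduces the inequality to $-\mathscr L_u w+c\,w\ge 0$ with $0\le c\in L^\infty_{\mathrm{loc}}$. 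At this point I would invoke the strong maximum principle (weak Harnack inequality) for this linearized operator in the weighted Sobolev setting of~\cite{DS1,DS2,DS3}: applied on the connected set $\Omega'$ to the nonnegative supersolution $w\in H^{1,2}_{\rho_u}\cap C(\Omega)$, it yields $w\equiv 0$ or $w>0$ in $\Omega'$. Claim (b) follows verbatim by testing the second component: $h$ solves $-\mathscr L_v h=\frac{\partial g}{\partial s}(u,v)\,w+\frac{\partial g}{\partial t}(u,v)\,h$, and since $\frac{\partial g}{\partial s}(u,v)\ge 0$ by~\eqref{coop} and $w\ge 0$, one gets $-\mathscr L_v h-\frac{\partial g}{\partial t}(u,v)\,h\ge 0$ and concludes $h\equiv 0$ or $h>0$ by the same principle. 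Combining (a) and (b) gives exactly (i)--(iv).

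The hard part will be this last invocation: the validity of the strong maximum principle for the linearized operators $\mathscr L_u,\mathscr L_v$, whose weights $|\nabla u|^{p-2}$ and $|\nabla v|^{m-2}$ degenerate (if $p,m>2$) or blow up (if $p,m<2$) on the critical sets $Z_u,Z_v$. This is precisely where the hypothesis $\frac{2N+2}{N+2}<p,m$ enters: combined with $|Z_u|=|Z_v|=0$ (cf.~\cite[Theorem 2.2]{DS3}) it gives the local summability of the weights and of their inverses that underlies the weighted Poincar\'e inequalities~\eqref{eq:pu}--\eqref{eq:pv}, and hence the weak Harnack inequality. Once that is granted, the cooperativity~\eqref{coop} and the sign of $(w,h)$ make the coupling harmless and the decoupled argument above closes. (No cross-compatibility needs to be checked: alternatives (i) and (iii) are simply the degenerate situations in which the equations force $\frac{\partial g}{\partial s}(u,v)\,w\equiv 0$, resp.\ $\frac{\partial f}{\partial t}(u,v)\,h\equiv 0$, so they are genuinely possible and there is nothing to rule out.)
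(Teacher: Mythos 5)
Your argument coincides with the paper's proof: decouple the system via cooperativity and nonnegativity so that each of $w,h$ is a nonnegative supersolution of the linearized weighted operator (after absorbing the zero-order term as in~\eqref{pranzo}), then invoke the weak Harnack inequality of~\cite[Theorem 1.1]{DS2} to get that $\{w=0\}$ and $\{h=0\}$ are open as well as closed, so each dichotomy follows from connectedness of $\Omega'$. The only cosmetic difference is that the paper states the open/closed step explicitly and cites exactly~\cite[Theorem 1.1]{DS2}, whereas you phrase the same conclusion as a strong maximum principle; the content is identical.
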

\begin{proof}
In light of \eqref{coop}, we have  $\frac{\partial f}{\partial t}(s,t)$ and $\frac{\partial g}{\partial s}(s,t)$
are non-negative for $s,t>0$. Then, taking into account \eqref{pranzo}, it follows that
 $w$ and $h$
are nonnegative functions solving the  inequalities
\begin{align*}
& \int_{\Omega} |\nabla u|^{p-2}(\nabla w,\nabla \varphi)
+(p-2)\int_{\Omega}|\nabla u|^{p-4}(\nabla u,\nabla w)(\nabla u,\nabla \varphi) \,dx+\int_{\Omega}\Lambda w\varphi\,dx\geq 0,   \\
& \int_{\Omega} |\nabla v|^{m-2}(\nabla h,\nabla \psi)
+(m-2)\int_{\Omega}|\nabla v|^{m-4}(\nabla v,\nabla h)(\nabla v,\nabla\psi)\, dx+\int_{\Omega}\Lambda v\psi\,dx\geq 0,
\end{align*}
for all nonnegative test functions $\varphi$ and $\psi$, where  $\Lambda$ is the constant appearing in \eqref{pranzo}.
Therefore,  we can apply~\cite[Theorem 1.1]{DS2} to
$w$ and to $h$ separately obtaining that,
for every $s>1$ sufficiently close to $1$,
there exist positive constants $C_1 ,C_2$ such that
\begin{equation}
	\label{CLAIMIIwINTY}
\|w\|_{L^s (B(x, 2\delta))}\leq C_1\inf_{B(x, \delta)}w
\quad
\text{and}
\quad
\|h\|_{L^s (B(x, 2\delta))}\leq C_2\inf_{B(x, \delta)}h.
\end{equation}
Then, in turn, the sets $\{x\in\Omega': w(x)=0\}$ and $\{x\in\Omega': h(x)=0\}$ are
both closed (by continuity) and open (via inequalitites~\eqref{CLAIMIIwINTY})
in the domain $\Omega'$, yielding the assertion.
\end{proof}

We have the following

\begin{proposition}
Let $(u,v)\in C^{1,\a}(\ov{\O})\times  C^{1,\a}(\ov{\O})$ be a solution to
system~\eqref{eq:sist} and let $\Lambda$ be as in
\eqref{eq:capitalLambda}. Assume that~\eqref{libandpos} and~\eqref{coop} hold and that $\frac{2N+2}{N+2}<p,m<\infty$.
Then, for any $a\leq\lambda\leq \Lambda$, we have
\begin{equation}\label{zxclu:1.8222bis}
u(x)< u_{\lambda}(x)
\quad\text{and}\quad
v(x)< v_{\lambda}(x),\qquad \text{for all $x \in \Omega _{\Lambda}$}.
\end{equation}
Also
\begin{equation}\label{zxclu:1.9222ubis}
\frac{\partial u}{\partial x_1}(x) > 0, \qquad \text{for all $x \in \Omega _{\Lambda}$},
\end{equation}
and
\begin{equation}\label{zxclu:1.9222vbis}
\frac{\partial v}{\partial x_1}(x) > 0, \qquad \text{for all $x \in \Omega _{\Lambda}$}.
\end{equation}
\end{proposition}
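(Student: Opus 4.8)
The plan is to upgrade the weak (non-strict) monotonicity statement of the previous proposition — which gave $u\le u_\lambda$, $v\le v_\lambda$ for $a\le\lambda\le\Lambda$ and $\partial u/\partial x_1\ge 0$, $\partial v/\partial x_1\ge 0$ in $\Omega_\Lambda$ — to the strict statements \eqref{zxclu:1.8222bis}, \eqref{zxclu:1.9222ubis}, \eqref{zxclu:1.9222vbis}, using the extra integrability afforded by the hypothesis $\frac{2N+2}{N+2}<p,m<\infty$. The key new tool is Theorem~\ref{zxcPMFGSY}, which is a strong maximum principle for the linearized system that, crucially, holds on \emph{all} of $\Omega'$ (not just on connected components of $\Omega'\setminus Z_u$): in the admissible range of exponents the weight $\rho_u=|\nabla u|^{p-2}$ (resp.\ $\rho_v$) is good enough that the Harnack-type inequality \eqref{CLAIMIIwINTY} holds across the critical set. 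This is precisely what lets us conclude $Z_u,Z_v\subset T_0$ in Theorem~\ref{main2}.

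First I would fix $\lambda$ with $a<\lambda<\Lambda$ and set $w=u_\lambda-u\ge 0$, $h=v_\lambda-v\ge 0$ on $\Omega_\lambda$. Using \eqref{eq:confr}--\eqref{eq:confr2} together with the standard convexity/monotonicity estimates for the $p$-Laplacian, one checks that $(w,h)$ is a nonnegative weak supersolution of the linearized operator $L_{(u,v)}$ in the sense made precise in the hypotheses of Theorem~\ref{zxcPMFGSY} — this is exactly the computation recorded inside the proof of that theorem, applied with the roles of $u,u_\lambda$ and $v,v_\lambda$. Since $w,h$ are continuous and nonnegative on the domain $\Omega_\lambda$, Theorem~\ref{zxcPMFGSY} applies with $\Omega'=\Omega_\lambda$ and yields that each of $w,h$ is either strictly positive throughout $\Omega_\lambda$ or identically zero there. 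The case $w\equiv 0$ (i.e.\ $u\equiv u_\lambda$ on $\Omega_\lambda$) is excluded exactly as in the proof of the previous proposition: it would force the reflection of $\partial\Omega_\lambda\setminus T_\lambda$ into $Z_u$, contradicting the connectedness of $\Omega\setminus Z_u$ (cf.~\cite{DS1,DS2}); similarly $h\not\equiv 0$. Hence $w>0$ and $h>0$ in $\Omega_\lambda$, which gives \eqref{zxclu:1.8222bis} for $a<\lambda<\Lambda$; the endpoint $\lambda=\Lambda$ follows by continuity from the open values (or is handled separately, noting $\Omega_\Lambda$ may be empty, in which case there is nothing to prove).

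For the gradient bounds \eqref{zxclu:1.9222ubis}--\eqref{zxclu:1.9222vbis} I would differentiate: by \eqref{zxcLI:VISY} the derivative $u_{x_1}$ is a weak solution of $L^1_{(u,v)}((u_{x_1},v_{x_1}),(\cdot,\cdot))=0$, and from \eqref{zxclu:1.8222bis} together with the weak monotonicity already established we know $u_{x_1}\ge 0$, $v_{x_1}\ge 0$ on $\Omega_\Lambda$. Applying Theorem~\ref{zxcPMFGSY} to the pair $(u_{x_1},v_{x_1})$ on any subdomain $\Omega'$ of $\Omega_\Lambda$ gives that on each such $\Omega'$ each component is either strictly positive or identically zero; but $u_{x_1}\equiv 0$ on an open set would, by unique continuation / the structure of the equation (or simply because $u(x_1,\cdot)$ would then be locally independent of $x_1$, contradicting $u=0$ on $\partial\Omega$ and $u>0$ inside together with \eqref{zxclu:1.8222bis}), be impossible; likewise for $v_{x_1}$. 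Running this over a connected exhaustion of $\Omega_\Lambda$ yields $u_{x_1}>0$ and $v_{x_1}>0$ everywhere in $\Omega_\Lambda$.

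The main obstacle is verifying the hypotheses of Theorem~\ref{zxcPMFGSY} cleanly for the differences $w=u_\lambda-u$ and $h=v_\lambda-v$: one must massage the difference of the two $p$-Laplacian weak formulations (with weights built from $\nabla u$ and from $\nabla u_\lambda$) into a \emph{single} inequality tested against the operator $L_{(u,v)}$ whose weight is $\rho_u=|\nabla u|^{p-2}$ alone — this requires the convexity inequality for $\xi\mapsto|\xi|^{p-2}\xi$ in the form that produces the $|\nabla u|^{p-2}$-weighted quadratic form on the left, plus absorbing the first-order terms coming from $\partial f/\partial s$ and $\partial g/\partial t$ via the shift constant $\Lambda$ from \eqref{pranzo}, and handling the cross terms $\partial f/\partial t$, $\partial g/\partial s$ using their sign from \eqref{coop}. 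The condition $\frac{2N+2}{N+2}<p,m$ is what guarantees $|\nabla u|^{p-2}\in L^1_{\mathrm{loc}}$ with the right summability for the weighted Sobolev space $H^{1,2}_{\rho_u}$ to behave well and for \cite[Theorem 1.1]{DS2} (the weighted Harnack inequality behind \eqref{CLAIMIIwINTY}) to be applicable; I would cite \cite{DS1,DS2,DS3} for these functional-analytic facts rather than reprove them.
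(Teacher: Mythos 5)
Your argument for the gradient positivity \eqref{zxclu:1.9222ubis}--\eqref{zxclu:1.9222vbis} is essentially the paper's: $(u_{x_1},v_{x_1})$ genuinely is a weak solution of $L_{(u,v)}((u_{x_1},v_{x_1}),\cdot)=0$ by \eqref{zxcLI:VISY}, it is nonnegative in $\Omega_\Lambda$ by the first proposition, and Theorem~\ref{zxcPMFGSY} plus the boundary condition $u=0$ on $\partial\Omega$, $u>0$ inside, rules out $u_{x_1}\equiv 0$ (similarly for $v$). That part is fine.

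The strict monotonicity \eqref{zxclu:1.8222bis} is where there is a real gap. You propose to feed the pair $w=u_\lambda-u$, $h=v_\lambda-v$ into Theorem~\ref{zxcPMFGSY}, but that theorem is formulated for solutions of $L_{(u,v)}=0$, whose second-order part has coefficient matrix built from $\nabla u$ \emph{alone} (weight $\rho_u=|\nabla u|^{p-2}$). The difference of the two quasilinear equations for $u$ and $u_\lambda$ does not linearize to this operator: by the mean-value (or integral) form of the linearization, $\Delta_p u_\lambda-\Delta_p u$ produces a second-order operator with weight (schematically) $\int_0^1|\nabla(tu_\lambda+(1-t)u)|^{p-2}\,dt$, not $|\nabla u|^{p-2}$. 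The ``convexity inequality'' you invoke yields a lower bound of the quadratic form by the weight $(|\nabla u|+|\nabla u_\lambda|)^{p-2}$; for $p>2$ this dominates $|\nabla u|^{p-2}$ from \emph{above}, which is the wrong direction to conclude that $w\in H^{1,2}_{\rho_u}$ is a supersolution of the fixed operator with weight $\rho_u$ to which \cite[Theorem 1.1]{DS2} applies. You flag this as ``the main obstacle'' but do not close it, and it does not close by the convexity inequality alone. The paper avoids the issue entirely: it obtains $u<u_\lambda$, $v<v_\lambda$ directly from the strong comparison principle for the quasilinear operators, \cite[Theorem 1.4]{Da2}, applied through \eqref{eq:confr}--\eqref{eq:sist1}, with the residual critical set then swept away because the gradient-positivity step (the part you did correctly) forces $Z_u,Z_v\subset T_0$. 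The cleanest repair of your write-up is therefore to reverse the order: first establish $u_{x_1}>0$, $v_{x_1}>0$ in $\Omega_\Lambda$ via Theorem~\ref{zxcPMFGSY}, note this makes $Z_u\cap\Omega_\Lambda=Z_v\cap\Omega_\Lambda=\emptyset$, and then deduce \eqref{zxclu:1.8222bis} either by integrating $u_{x_1}$ along segments or by invoking \cite[Theorem 1.4]{Da2} now that there is no critical set to exclude.
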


\begin{proof}
	To prove~\eqref{zxclu:1.8222bis} it is sufficient to apply equations~\eqref{eq:confr} and~\eqref{eq:sist1}.
Instead to get~\eqref{zxclu:1.9222ubis} and~\eqref{zxclu:1.9222vbis} we use equations~\eqref{zxclu:1.9222u} and \eqref{zxclu:1.9222v}, together with Theorem~\ref{zxcPMFGSY}.
\end{proof}

\medskip
\subsection{Proof of Theorem \ref{main3}}
For  any given $x \in \R$, by Hopf boundary Lemma, (see~\cite{PSB}), it follows that
$$
u_y(x,0)=\frac{\partial u}{\partial y}(x,0) > 0
\quad
\text{and}
\quad
v_y(x,0)=\frac{\partial v}{\partial y}(x,0) > 0.
$$
We can therefore fix  $x_0$ and  $r$   such that
\begin{equation}\label{moster}
\frac{\partial u}{\partial y}(x,y)\geq \gamma > 0,\quad\frac{\partial v}{\partial y}(x,y)\geq \gamma > 0\quad \text{for all $(x,y) \in B_{2r}(x_0)$}\cap\{y\geq 0\},
\end{equation}
for some $\gamma >0$.
 Now, it follows that, for $\lambda\leq r$ fixed,
we have $\frac{\partial u}{\partial y}(x_0,y) > 0$ and $\frac{\partial v}{\partial y}(x_0,y) > 0$,
provided $0\leq y \leq \lambda$ and for every $0<\lambda'\leq \lambda$ we get
$u(x_0,y)<u(x_0,2\lambda'-y)$ and $v(x_0,y)<v(x_0,2\lambda'-y)$, provided that
$y\in [0,\lambda')$. Therefore we can exploit Theorem~\ref{intermedio} in the appendix and get that
for every $0<\lambda'\leq \lambda$ we have  $u(x_0,y)< u(x_0\,,\,2\lambda'-y)$ and $v(x_0,y)< v(x_0\,,\,2\lambda'-y)$ in $\Sigma_{\lambda'}\equiv \{(x,y): 0<y<\lambda'\}$.
Let us set
$$
\Lambda=\{\lambda\in \mathbb{R}^+:\text{$u<u_{\lambda'}$ and $v<v_{\lambda'}$ in $\Sigma_{\lambda'}$,
for all $\lambda'\leq\lambda$}\}.
$$
Define
$$
\bar{\lambda}=\sup\Lambda.
$$

We will prove the theorem proving that $\bar{\lambda}=\infty$. Note that, by continuity $u\leq u_{\bar{\lambda}}$
and $v\leq v_{\bar{\lambda}}$ in $\Sigma_{\bar{\lambda} }$  and also
$u< u_{\bar{\lambda}}$ and $v< v_{\bar{\lambda}}$, by the strong comparison principle.
Moreover by the above arguments we have
 $\frac{\partial u}{\partial y}(x,y)\geq 0$ and
$\frac{\partial u}{\partial y}(x,y)\geq 0$ in
$\Sigma_{\bar{\lambda}}$. Furthermore, by the strong maximum
principle for the linearized operator (see Theorem \ref{zxcPMFGSY}),
it follows that
$$
\frac{\partial u}{\partial y}(x,y)>0
\quad \text{and}\quad
\frac{\partial v}{\partial y}(x,y)>0,
$$
in $\Sigma_{\bar{\lambda}}$. To prove  that  $\bar{\lambda}=\infty$, let us argue by contradiction, and assume $\bar{\lambda}<\infty$. First of all let us show that
there exists some $\bar{x}\in\mathbb{R}$  such that
$$
\frac{\partial u}{\partial y} (\bar{x}\, ,\,\bar{\lambda})> 0
\quad\text{and}\quad
\frac{\partial v}{\partial y} (\bar{x}\, ,\,\bar{\lambda})> 0.
$$
Note that by continuity
$\frac{\partial u}{\partial y}(x,\bar{\lambda}),\frac{\partial v}{\partial y}(x,\bar{\lambda})\geq 0$.

Let us first show that there exists a point $x_0$ where $\frac{\partial u}{\partial y} (x_0\, ,\,\bar{\lambda})>0$. To prove this
we argue by contradiction and assume that

$$
\frac{\partial u}{\partial y} (x\, ,\,\bar{\lambda})=0
$$
for every $x\in \R$. Now, consider the function $u^\star (x,y)$
defined in $\Sigma_{2\bar{\lambda}}$ by
\[
u_\star (x,y)\equiv
\begin{cases}
u(x,y)        &    \text{if $0\leq y\leq \bar{\lambda}$,}  \\
u(x,2\bar{\lambda}-y) & \text{if $\bar{\lambda}\leq y\leq 2\bar{\lambda}$},
\end{cases}
\]
and consider the function $u_{\star}(x,y)$ defined in $\Sigma_{2\bar{\lambda}}$ by
\[
u^\star (x,y)\equiv
\begin{cases}
u(x,2\bar{\lambda}-y)     &\text{if $0\leq y\leq \bar{\lambda}$},   \\
u(x,y)           & \text{if $\bar{\lambda}\leq y\leq 2\bar{\lambda}$}.
\end{cases}
\]
Note that $u_\star$ is the even reflection of $u|_{\Sigma_{\bar{\lambda}}}$ and $u^\star$ is the even reflection of $u|_{\Sigma_{2\bar{\lambda}}\setminus \Sigma_{\bar{\lambda}}}$.
Also let  $v^\star$ and $v_{\star}$ defined in a similar fashion.  \\
Since we are assuming that $\frac{\partial u}{\partial y} (x\, ,\,\bar{\lambda})=0$
for every $x\in \mathbb{R}$, it follows that $u^\star$ and $u_{\star}$ are $C^1$ solutions of $-\Delta_m u^\star=f(u^\star,v^\star)$ and $-\Delta_m u_\star=f(u_\star,v_\star)$ respectively.
Since by definition $u<u_{\bar{\lambda}}$  and $v<v_{\bar{\lambda}}$ in $\Sigma_{\bar{\lambda}}$, we have
\[
u_\star\leq u^\star\quad \text{and}\quad
v_\star\leq v^\star
\]
in $\Sigma_{2\bar{\lambda}}$. Also $u_\star$ does not coincide with $u^\star$
because of the strict inequality $u<u_{\bar{\lambda}}$ in $\Sigma_{\bar{\lambda}}$.
Also, arguing as in \eqref{eq:confr} (see also \eqref{eq:confr2}), we find $\Lambda >0$ sufficiently large such that
$$
-\Delta_p u_\star +\Lambda u_\star\leq -\Delta_p u^\star +\Lambda u^\star
$$
Since $u_{\star}(x,\bar{\lambda})=u^{\star}(x,\bar{\lambda})$ for any $x\in\mathbb{R}$, by the strong comparison principle (see \cite[Theorem 1.4]{DS2}) it would follow that $u_\star\equiv u^{\star}$ in $\Sigma_{2\bar{\lambda}}$. This contradiction actually proves that there exists some $x_0\in\mathbb{R}$  such that $\frac{\partial u}{\partial y} (x_0\, ,\,\bar{\lambda})> 0$.

Let now $x_0\in\mathbb{R}$  such that $\frac{\partial u}{\partial y} (x_0\, ,\,\bar{\lambda})> 0$, and consider an interval $[x_0-\delta\, ;\, x_0+\delta]$ where $u_y$ is still strictly positive.
We claim that there exists $\bar{x}\in [x_0-\delta\, ;\, x_0+\delta]$
such that $\frac{\partial v}{\partial y} (\bar{x}\, ,\,\bar{\lambda})> 0$.
To prove this, assume by contradiction that $\frac{\partial v}{\partial y} (x\, ,\,\bar{\lambda})= 0$
for every $x\in[x_0-\delta\, ;\, x_0+\delta]$
and consider  $v^\star$ and $v_{\star}$ as above. Exploiting the strong comparison principle exactly as above in $\{(x,y)\,|\, x\in[x_0-\delta\, ;\, x_0+\delta]\}$, we get a contradiction.
Therefore we conclude that there exists a $\bar{x}$ such that $\frac{\partial v}{\partial y} (\bar{x}\, ,\,\bar{\lambda})> 0$. For such
$\bar x$ we therefore have
$$
\frac{\partial u}{\partial y} (\bar{x}\, ,\,\bar{\lambda})> 0
\quad\text{and}\quad
\frac{\partial v}{\partial y} (\bar{x}\, ,\,\bar{\lambda})> 0.
$$
Since now we have proved that $\frac{\partial u}{\partial y}(x_0 \, , \,  y)>0$ and $\frac{\partial v}{\partial y}(x_0 \, , \,  y)>0$ for every $y \in [0, \bar{\lambda} ]$,
it follows that we can find $\varepsilon >0$ such that
\begin{itemize}
\item[a)]$\frac{\partial u}{\partial y}(x_0\,,\,y)>0$ and $\frac{\partial v}{\partial y}(x_0\,,\,y)>0$ for every $y\in [0,\bar{\lambda}+\varepsilon]$
\item[b)] For every $0<\lambda'\leq \bar{\lambda}+\varepsilon$ we get  $u(x_0,y)<u(x_0\,,\,2\lambda'-y)$ and $v(x_0,y)<v(x_0\,,\,2\lambda'-y)$ provided that $y\in [0\, ,\, \lambda')$.
\end{itemize}
Note that $a)$ follows easily by the continuity of the derivatives. The proof of $b)$ is standard in the moving plane technique.
By Theorem \ref{intermedio} we now get that $u<u_{\lambda'}$ and  $v<v_{\lambda'}$ for every $0<{\lambda'}<\bar{\lambda}+\varepsilon$ which implies
$\sup \Lambda>\bar{\lambda}$, a contradiction. Therefore $\bar{\lambda}=\infty$.

\medskip
\section*{appendix}
We state and prove here a theorem which follows some  ideas contained in~\cite{DS4}.
For the readers convenience we provide a blueprint of the proof, which is also based
on  Proposition~\ref{pro:confr}.

\begin{theorem}
	\label{intermedio}Assume that~\eqref{libandpos} and~\eqref{coop} hold, and
let $(u,v)$ be a  weak $C^{1,\alpha}_{{\rm loc}}(\DD)\times C^{1,\alpha}_{{\rm loc}}(\DD)$ solution of~\eqref{eq:sist:semi}.
Assume that $\frac{3}{2}<p,m<\infty$.
Let $x_0\in \mathbb{R}$ and $\lambda\in \mathbb{R}$ fixed, and assume that
\begin{itemize}
\item[a)]$\frac{\partial u}{\partial y}(x_0\,,\,y)>0$ and $\frac{\partial v}{\partial y}(x_0\,,\,y)>0$ for every $y\in [0,\lambda]$
\item[b)] For every $0<\lambda'\leq \lambda$ we have  $u(x_0,y)<u(x_0\,,\,2\lambda'-y)$ and $v(x_0,y)<v(x_0\,,\,2\lambda'-y)$
(that is $u<u_{\lambda'}$, $v<v_{\lambda'}$) provided that $y\in [0\, ,\, \lambda')$.
\end{itemize}
Then, for every $0<\lambda'\leq \lambda$ and $(x,y)\in \Sigma_{\lambda'}$, it follows
\begin{center}
$u(x\, ,\,y)<u(x\, ,\,2\lambda'-y)\quad$ and $\quad v(x\, ,\,y)<v(x\, ,\,2\lambda'-y)$.
\end{center}
\end{theorem}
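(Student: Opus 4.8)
The plan is to carry out a moving-plane argument with respect to the hyperplanes $T_{\lambda'}=\{y=\lambda'\}$ orthogonal to $\partial\DD$ (writing $u_{\lambda'}(x,y)=u(x,2\lambda'-y)$, and similarly $v_{\lambda'}$); the only genuinely new difficulty is that the strips $\Sigma_{\lambda'}$ are unbounded in the $x$-direction, so Proposition~\ref{pro:confr} is not directly applicable on $\Sigma_{\lambda'}$. Following the geometric device of~\cite{DS4} (in the spirit of~\cite{BCN}), this is overcome by using the invariance of~\eqref{eq:sist:semi} under translations in $x$, and by propagating in the $x$-direction the one-dimensional information provided, through~(a)--(b), on the line $\{x=x_0\}$.

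I would proceed as follows. By~(a) and the continuity of $\nabla u$, $\nabla v$, there is $\delta_0>0$ such that $u_y>0$ and $v_y>0$ on $\{|x-x_0|\le\delta_0\}\cap\{0\le y\le\lambda\}$; integrating in $y$ and using $u=v=0$ on $\{y=0\}$, this gives $u<u_{\lambda'}$ and $v<v_{\lambda'}$ in $\{|x-x_0|<\delta_0\}\cap\Sigma_{\lambda'}$ for every $\lambda'\in(0,\lambda]$. Set
\[
R^\ast:=\sup\Big\{R\ge\delta_0:\ u<u_{\lambda'}\text{ and }v<v_{\lambda'}\text{ in }\{|x-x_0|<R\}\cap\Sigma_{\lambda'}\text{ for all }\lambda'\in(0,\lambda]\Big\},
\]
and argue by contradiction assuming $R^\ast<\infty$. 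By continuity, $u\le u_{\lambda'}$ and $v\le v_{\lambda'}$ in the closure of $\{|x-x_0|<R^\ast\}\cap\Sigma_{\lambda'}$ for every $\lambda'\le\lambda$; then, exactly as in~\eqref{eq:confr}--\eqref{eq:sist1}, the cooperativity~\eqref{coop} makes $(u,u_{\lambda'})$ and $(v,v_{\lambda'})$ ordered sub/supersolution pairs for $-\Delta_p(\cdot)+\Lambda(\cdot)$ and $-\Delta_m(\cdot)+\Lambda(\cdot)$ in $\{|x-x_0|<R^\ast\}\cap\Sigma_{\lambda'}$, and the strong comparison principle \cite[Theorem 1.4]{DS2} yields, in each connected component, the alternative $u<u_{\lambda'}$ or $u\equiv u_{\lambda'}$ (likewise for $v$); the coincidence case is ruled out as in~\cite{DS1,DS2}, or simply because $u=0<u_{\lambda'}$ on $\{y=0\}$. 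Hence $u<u_{\lambda'}$ and $v<v_{\lambda'}$ strictly in $\{|x-x_0|<R^\ast\}\cap\Sigma_{\lambda'}$ for every $\lambda'\le\lambda$. Using the strict inequality with $\lambda'=y+h$ and letting $h\to0^+$ gives $u_y\ge0$, $v_y\ge0$ in $\{|x-x_0|<R^\ast\}\cap\{0<y<\lambda\}$; since $(u_y,v_y)$ solves the linearized system~\eqref{zxcLI:VISY}, Theorem~\ref{zxcPMFGSY} forces one of its four alternatives on that domain, and those containing $u_y\equiv0$ or $v_y\equiv0$ are impossible (they would give $u\equiv0$ or $v\equiv0$, using $u=v=0$ on $\{y=0\}$). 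Therefore $u_y>0$ and $v_y>0$ in $\{|x-x_0|<R^\ast\}\cap\{0<y<\lambda\}$.

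It remains to push the strict inequalities slightly past $R^\ast$, which contradicts the definition of $R^\ast$ and forces $R^\ast=\infty$. This is where the geometric idea of~\cite{DS4} is used: besides the strict ordering and the monotonicity $u_y,v_y>0$ just established in $\{|x-x_0|<R^\ast\}\cap\Sigma_\lambda$, one also has, by the Hopf lemma on $\partial\DD$ (uniformly on compact subsets of $\{y=0\}$) together with this monotonicity, that $u<u_{\lambda'}$ and $v<v_{\lambda'}$ whenever $|x-x_0|\le R^\ast+\varepsilon$ and $y$ is small; then one applies Proposition~\ref{pro:confr} on a bounded domain $\Omega_0$ which is a thin collar $R^\ast-\eta<|x-x_0|<R^\ast+\varepsilon$ whose height equals $\lambda'$ over the strict region and tapers down to $0$ as $|x-x_0|\to R^\ast+\varepsilon$, where $u=v=0$. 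Such an $\Omega_0$ has arbitrarily small measure for $\eta,\varepsilon$ small and carries the boundary inequalities $u\le u_{\lambda'}$, $v\le v_{\lambda'}$ on all of $\partial\Omega_0$, so Proposition~\ref{pro:confr} gives the same inequalities inside $\Omega_0$; a finite iteration raising the tapering height layer by layer up to $\lambda'$ then yields $u<u_{\lambda'}$, $v<v_{\lambda'}$ in $\{|x-x_0|<R^\ast+\varepsilon\}\cap\Sigma_{\lambda'}$ for all $\lambda'\le\lambda$, the desired contradiction. The range $\frac{3}{2}=\frac{2N+2}{N+2}<p,m$ with $N=2$ is exactly the one in which Theorem~\ref{zxcPMFGSY} and the weighted Poincaré inequalities underlying Proposition~\ref{pro:confr} hold. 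I expect this last, propagation step to be the main obstacle: in an unbounded strip there is no hyperplane to start the scheme from at $x=-\infty$, so everything rests on transporting the one-dimensional data~(a)--(b) across the strip and on designing the comparison domains — kept thin in the direction one does not control — so that Proposition~\ref{pro:confr} remains applicable in spite of the loss of compactness.
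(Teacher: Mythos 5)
Your proposal takes a genuinely different route from the paper. You slide the vertical ``wall'' $\{|x-x_0|=R\}$ outward, define $R^\ast$ as the supremal $R$ for which the ordering $u<u_{\lambda'}$, $v<v_{\lambda'}$ holds in $\{|x-x_0|<R\}\cap\Sigma_{\lambda'}$ for all $\lambda'\le\lambda$, and argue $R^\ast=\infty$ by contradiction. The paper instead follows the \emph{rotating-plane} device of \cite{DS4}: it tilts the reflecting line by a small angle $\theta$ through $(x_0,s)$, so that the comparison domain $\mathcal{T}_{x_0,s,\theta}$ is a \emph{bounded triangle} whose entire boundary consists of three pieces where the ordering is automatic: $\{y=0\}$ (where $u=v=0$), $\{x=x_0\}$ (controlled by Claim~1, which follows from hypothesis~(a) via a mean-value compactness argument), and the tilted line itself (where $u$ equals its reflection). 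Proposition~\ref{pro:confr} is applied on small triangles (Claim~2), and then a continuous homotopy $g(t)=(s(t),\theta(t))$ from $(\bar s,\delta)$ to $(\lambda',0)$ propagates the strict inequality. The whole point of the tilt is that the triangle has no ``outer wall'' with unknown boundary data.

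The crucial step in your argument --- pushing past $R^\ast$ --- is precisely where this structural advantage is missing, and it contains a genuine gap. The tapering collar $\Omega_0$ you describe, with ``height equal to $\lambda'$ over the strict region'' and tapering to $0$ at $|x-x_0|=R^\ast+\varepsilon$, has on its slanted top boundary, for $|x-x_0|$ slightly above $R^\ast$, points at heights $y$ of order $\lambda'$ (not small), and there $u\le u_{\lambda'}$ is exactly what you are trying to prove, not something you know. The Hopf/continuity argument only supplies the ordering on a thin sliver $\{y<\gamma'\}$ near $\partial\DD$, uniformly on $|x-x_0|\le R^\ast+\varepsilon$; it does \emph{not} cover the part of the taper at heights comparable to $\lambda'$. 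The ``finite iteration raising the tapering height layer by layer'' is not spelled out, and it is not clear it terminates: each new taper again has a portion at heights where the inequality is unknown, so the raise per step degenerates. (In fact, the continuous version of that iteration is exactly the moving-rotating plane, which is what the paper does; your argument implicitly reinvents it but leaves the convergence unjustified.) A smaller issue, but worth flagging: your very first reduction --- that $u_y>0$ on $\{|x-x_0|\le\delta_0,\ 0\le y\le\lambda\}$ gives $u<u_{\lambda'}$ in $\{|x-x_0|<\delta_0\}\cap\Sigma_{\lambda'}$ for \emph{every} $\lambda'\le\lambda$ --- is incomplete when $2\lambda'-y>\lambda$ (the reflected point lies above where monotonicity is known). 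It can be repaired by combining (b), a compactness argument away from $\{y=\lambda'\}$, and a neighbourhood-of-$T_{\lambda'}$ argument using (a), but as written it is a jump.
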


\begin{proof}
Let $L_\theta$ be the vector $(\cos \theta , \sin \theta)$ and $V_\theta$  the vector orthogonal to $L_{\theta}$ such that $(V_\theta,e_2)\geq 0$. We define $L_{x_0,s,\theta}$ the line parallel to $L_\theta$ passing through $(x_0,s)$.
We define $\mathcal{T}_{x_0,s,\theta}$ as the triangle delimited by $L_{x_0,s,\theta}$, $\{y=0\}$ and $\{x=x_0\}$, and we set
$
u_{x_0,s,\theta}(x)=u(T_{x_0,s,\theta}(x))$ and $v_{x_0,s,\theta}(x)=v(T_{x_0,s,\theta}(x))$,
where $T_{x_0,s,\theta}(x)$ is the point symmetric to $x$, w.r.t. $L_{x_0,s,\theta}$.
It is well known that $u_{x_0,s,\theta}$ and $v_{x_0,s,\theta}$ still are solutions of our system.
Also for simplicity we set $u_{x_0,s,0}=u_s$  and $v_{x_0,s,0}=v_s$.
Let us now consider $x_0\in \mathbb{R}$ and $\lambda\in \mathbb{R}$ fixed
as in the statement. We have the following
\vskip8pt
\noindent
{\bf Claim 1.}
{\em
There exists $\delta >0$ such that
for any $-\delta\leq \theta\leq\delta$ and for any $0< \lambda'\leq\lambda+\delta$
we have
$
u(x_0,y)<u_{x_0,\lambda',\theta}(x_0,y)$ and
$v(x_0,y)<v_{x_0,\lambda',\theta}(x_0,y)$
for every $0\leq y<\lambda'$}.\\

We argue by contradiction. If the claim were false, we could find
a sequence of  $\delta _n$ converging to $0$  and
$-\delta _n \leq\theta_n\leq \delta _n$, $0<\lambda_n\leq \lambda+\delta _n$,
$0\leq y_n< \lambda_n$ such that
$$
u(x_0,y_n)\geq u_{x_0,\lambda_n ,\theta_n}(x_0,y_n)
\quad\text{or}\quad
v(x_0,y_n)\geq v_{x_0,\lambda_n ,\theta_n}(x_0,y_n).
$$
For a sequence $y_n$, eventually considering a subsequence, we may assume that
$u(x_0,y_n)\geq u_{x_0,\lambda_n ,\theta_n}(x_0,y_n)$ for any $n\in\N$ or
$v(x_0,y_n)\geq v_{x_0,\lambda_n ,\theta_n}(x_0,y_n)$ for any $n\in\N$. Let us assume that
$u(x_0,y_n)\geq u_{x_0,\lambda_n ,\theta_n}(x_0,y_n)$ for any $n\in\N$.
At the limit, eventually considering subsequences,
 we may  assume that $\lambda_n$ converges to $\tilde{\lambda}\leq \lambda$. In addition
$y_n$ converges to $\tilde{y}$ for some $\tilde{y}\leq \tilde{\lambda}$. Let us show that  $\tilde{y}=\tilde{\lambda}$.
If  $\tilde{\lambda}=0$ it also follows  $\tilde{y}=\tilde{\lambda}=0$ since $0\leq y_n<  \lambda_n$.
If instead $\tilde{\lambda}>0$, by continuity it follows that $u(x_0,\tilde{y})\geq u_{\tilde{\lambda}}(x_0,\tilde{y})$.
Consequently
$y_n$ converges to $\tilde{\lambda}=\tilde{y}$ since  we know that $u<u_{\lambda'}$ for all $\lambda'\leq \bar{\lambda}$ in $\,\Sigma_{\lambda '}$.
By the mean value theorem since $u(x_0,y_n)\geq u_{x_0,\lambda_n ,\theta_n}(x_0,y_n)$, it follows that
$\frac{\partial u}{\partial V_{\theta_n}}(\tilde{x}_n,\tilde{y}_n)\leq 0$
at some point $\xi_n\equiv (\tilde{x}_n,\tilde{y}_n)$ lying on  the line from $(x_0,y_n)$ to $T_{x_0,\lambda_n,\theta_n}(x_0,y_n)$. We recall that
the vector $V_{\theta_n}$ is orthogonal to the line $L_{x_0,\lambda_n,\theta_n}$ and $V_{\theta_n}$ converges to $e_2$ since $\theta_n$ goes to $0$. Passing to the limit it follows that
$\frac{\partial u}{\partial y}(x_0,\tilde{\lambda})\leq 0$
which is impossible by the assumptions, proving the claim. \\

\noindent Let $\delta$ be the value provided by Claim 1.
\vskip8pt
\noindent
{\bf Claim 2.}
{\em
There is
$\rho=\rho(\delta)$ such that, for any $0<s\leq \rho$, the following inequalities hold:
$u < u_{x_0,s,\delta}$ in $\mathcal{T}_{x_0,s,\delta}$
($u < u_{x_0,s,-\delta}$ in $\mathcal{T}_{x_0,s,-\delta}$) and
$v < v_{x_0,s,\delta}$ in $\mathcal{T}_{x_0,s,\delta}$
($v < v_{x_0,s,-\delta}$ in $\mathcal{T}_{x_0,s,-\delta}$)}.\\

\noindent We prove that we can find $\rho=\rho(\delta)$ such that, for every $0<s\leq \rho$, it follows
$u < u_{x_0,s,\delta}$ in $\mathcal{T}_{x_0,s,\delta}$ and $v < v_{x_0,s,\delta}$ in $\mathcal{T}_{x_0,s,\delta}$.
If we replace $\delta$ by $-\delta$ the proof is exactly the same. To prove this, we can set $\rho$ in such a way that
\begin{enumerate}
\item[{\rm (i)}] $\rho<\lambda$, where $\lambda$ is given in the statement.
\item[{\rm (ii)}] For every $0<s\leq \rho$ we have $u\leq u_{x_0,s,\delta}$ on $\partial(\mathcal{T}_{x_0,s,\delta})$ and $v\leq v_{x_0,s,\delta}$ on $\partial(\mathcal{T}_{x_0,s,\delta})$.
\item [{\rm (iii)}] For $\rho$ small enough and $0<s\leq \rho$,
$\mathcal{L}(\mathcal{T}_{x_0,s,\delta})$ is so
small  to exploit Proposition \ref{pro:confr}.
 \end{enumerate}
Therefore,  given any $0<s\leq \rho$,  if we consider  $w_{x_0,s,\delta}=u-u_{x_0,s,\delta}$ and  $h_{x_0,s,\delta}=v-v_{x_0,s,\delta}$, we have that
 $w_{x_0,s,\delta}\leq 0$ and $h_{x_0,s,\delta}\leq 0$ on $\partial\mathcal{T}_{x_0,s,\delta}$ and
therefore, by Proposition \ref{pro:confr}, we get
$w_{x_0,s,\delta}\leq 0$ and $h_{x_0,s,\delta}\leq 0$ in $\mathcal{T}_{x_0,s,\delta}$. Also, by the strong
comparison principle exploited as above (see~\eqref{eq:sist1} and~\eqref{eq:confr}),
it follows that the strict inequalities hold. This concludes the proof of Claim 2.
\vskip8pt
\noindent
Consider now the values $\rho$ and $\delta $  provided by the Claims. Consider $0<\lambda'\leq\lambda$ and let us fix $0<\bar{s}<\min\{\rho,\lambda'\}$ so that by Claim 2 we have
  $w_{x_0,\bar{s},\delta}<0$ and $h_{x_0,\bar{s},\delta}\,< 0$ in $\mathcal{T}_{x_0,\bar{s},\delta}$.
We now define the continuous function $g(t)=(s(t),\theta (t)):[0,1] \to\R^2$, by
$s(t)=(t\lambda'+(1-t)\bar{s}$ and $\theta(t)=(1-t)\delta$, so that $g(0)=(\bar{s},\delta)$, $g(1)=(\lambda',0)$ and
$\theta (t)\neq 0$ for every $t\in [0,1)$. Moreover Claim 1 yields  $w_{x_0,\bar{s},\delta}\leq 0$ and $h_{x_0,s,\delta}\,\leq 0$ on $\partial(\mathcal{T}_{x_0,s(t),\theta(t)})$  for every $t\in [0,1)$.
Also $w_{x_0,s(t),\theta(t)}$ and $h_{x_0,s,\delta}$ are not identically zero on $\partial(\mathcal{T}_{x_0,s(t),\theta(t)})$,  for every $t\in [0,1)$.
We now let
$$
\overline{T}=\{\text{$\tilde{t}\in [0,1]$ such that $w_{x_0,\bar{s},\delta}\, ;\,h_{x_0,s,\delta}\,< 0$ in $\mathcal{T}_{x_0,s(t),\theta(t)}$ for every $0\leq t\leq\tilde{t}$}\},
$$
and $\bar{t}=\sup \overline{T},$
where, possibly, $\bar{t}=0$. Exploiting the moving-rotating plane technique as in  \cite{DS4} it follows that
$\bar{t}=1$, concluding the proof.
\end{proof}

\bigskip

\bigskip
\bigskip

\end{document}